\documentclass[reqno, 11pt]{amsart}

\makeatletter
\let\origsection=\section \def\section{\@ifstar{\origsection*}{\mysection}} 
\def\mysection{\@startsection{section}{1}\z@{.7\linespacing\@plus\linespacing}{.5\linespacing}{\normalfont\scshape\centering\S}}
\makeatother  

\usepackage{mathrsfs}
\usepackage{amsmath}
\usepackage{amssymb}
\usepackage{amsthm}
\usepackage{mathtools}
\usepackage{letterswitharrows}
\usepackage{tikz-cd}
\usetikzlibrary{intersections,spath3}
\usepackage{thmtools}
\usepackage{thm-restate}
\usepackage[only,llbracket,rrbracket]{stmaryrd}
\usepackage[linesnumbered,ruled,vlined]{algorithm2e}
\usepackage{enumitem}
\setenumerate{label={\normalfont (\roman*)}}

\usepackage[utf8]{inputenc}
\usepackage[T1]{fontenc}
\usepackage{lmodern}
\usepackage[babel]{microtype}
\usepackage[english]{babel}
\usepackage{relsize}

\usepackage{graphicx}
\usepackage{subcaption}
\usepackage{import}

\usepackage{svg}
\usepackage[extractname=filename]{svg-extract}

\usepackage{geometry}
\usepackage{xcolor} 
\colorlet{darkishRed}{red!80!black}
\colorlet{darkishBlue}{blue!60!black}
\colorlet{darkishGreen}{green!60!black}
\definecolor{lucasBlue}{HTML}{A4D4F5}
\definecolor{lucasDarkGreen}{HTML}{8CBF65}
\definecolor{lucasLightRed}{HTML}{F5B3B9}
\definecolor{lucasOrange}{HTML}{EBC07A}

\usepackage{hyperref}
\hypersetup{
	colorlinks,
	linkcolor={blue!60!black},
	citecolor={green!60!black},
	urlcolor={blue!60!black},
}
\usepackage[abbrev, msc-links]{amsrefs}
\usepackage[nameinlink, capitalise, noabbrev]{cleveref}
\crefformat{enumi}{#2#1#3}
\crefformat{equation}{#2(#1)#3}
\crefname{mainresult}{Theorem}{Theorems}
\usepackage{doi}

\renewcommand{\PrintDOI}[1]{\doi{#1}}

\let\setminus=\smallsetminus

\renewcommand{\subset}{\subseteq}

\renewcommand{\leq}{\leqslant}
\renewcommand{\geq}{\geqslant}

\newtheorem{theorem}{Theorem}[section] 

\newtheorem{corollary}[theorem]{Corollary}
\newtheorem{lemma}[theorem]{Lemma}

\newcounter{claimcounter}[theorem]
\newtheorem{claim}[claimcounter]{Claim}
\newtheorem*{claim*}{Claim}
\crefname{claim}{Claim}{Claims}
\newcounter{casecounter}[theorem]
\newtheorem*{case*}{Case}
\crefname{case}{Case}{Cases}

\newcounter{subclaimcounter}[claimcounter]
\newtheorem*{subclaim*}{Subclaim}

\theoremstyle{definition}

\crefname{mainexample}{Example}{Examples}

\crefname{example}{Example}{Examples}

\crefname{routine}{Routine}{Routines}

\crefname{subroutine}{Subroutine}{Subroutines}

\crefname{subsubroutine}{Subsubroutine}{Subsubroutines}

\crefname{step}{Step}{Steps}
\theoremstyle{remark}

\crefname{subsection}{Subsection}{Subsections}

\newcommand{\COMMENT}[1]{{}}

\let\epsilon=\varepsilon
\let\theta=\vartheta
\let\rho=\varrho
\let\phi=\varphi
\def\N{\mathbb N}

\makeatletter

\def\calCommandfactory#1{%
  \expandafter\def\csname c#1\endcsname{\mathcal{#1}}}
\def\frakCommandfactory#1{%
  \expandafter\def\csname frak#1\endcsname{\mathfrak{#1}}}
   
\newcounter{ctr}
\loop
  \stepcounter{ctr}
  \edef\X{\@Alph\c@ctr}
  \expandafter\calCommandfactory\X
  \expandafter\frakCommandfactory\X
\ifnum\thectr<26
\repeat

\usepackage{etoolbox}

\newbool{arXiv}
\booltrue{arXiv} 

\newcommand{\arXivOrNot}[2]{\ifbool{arXiv}{{#1}}{{#2}}}

\newbool{pdfBool}
\booltrue{pdfBool} 

\newcommand{\pdfOrNot}[2]{\ifbool{pdfBool}{{#1}}{{#2}}}

\makeatletter
\newcommand\thankssymb[1]{\textsuperscript{\@fnsymbol{#1}}}
\makeatother

\newcounter{mylabelcounter}

\makeatletter
\newcommand{\labelText}[2]{%
#1\refstepcounter{mylabelcounter}%
\immediate\write\@auxout{%
  \string\newlabel{#2}{{1}{\thepage}{{\unexpanded{#1}}}{mylabelcounter.\number\value{mylabelcounter}}{}}%
}%
}

\makeatletter
\newcommand\footnoteref[1]{\protected@xdef\@thefnmark{\ref{#1}}\@footnotemark}
\makeatother

\definecolor{cMaroon}{HTML}{93152a}
\newcommand{\defn}[1]{{\color{cMaroon}{\emph{#1}}}}

\DeclareMathOperator{\obsii}{\mathcal{H}}

\usepackage{tabularx}

\title{On tree-decompositions for infinite chordal graphs}

\author[M.~Pitz]{Max Pitz}

\author[L.~Real]{Lucas Real\thankssymb{1}}
\thanks{\thankssymb{1} Supported by São Paulo Research Foundation (FAPESP) through grant number 2025/00669-5.}

\author[R.~Schaut]{Roman Schaut\thankssymb{2}}
\thanks{\thankssymb{2} Supported by a State Graduate Funding Program scholarship of the University of Hamburg.}

\address{University of Hamburg, Department of Mathematics, Bundesstrasse 55 (Geomatikum), 20146 Hamburg, Germany}
\email{\{max.pitz, roman.schaut, lucas.real\}@uni-hamburg.de}

\keywords{Chordal graphs; tree-decompositions; finite tree-width.}

\subjclass[2020]{05C63, 05C40, 05C62, 	05C69}

\begin{document}

\begin{abstract}
 
A graph is chordal if it contains no induced cycle of length four or more. While finite chordal graphs are precisely those admitting tree-decompositions into cliques, this fails for infinite graphs. We establish two results extending the known theory to the infinite setting.

Our first result strengthens sufficient conditions of Halin, K\v{r}\'i\v{z}-Thomas, and Chudnovsky-Nguyen-Scott-Seymour: We show that every chordal graph without a strict comb of cliques admits a tree-decomposition into maximal cliques.

Our second result characterises the chordal graphs admitting tree-decompositions into finite cliques: a connected graph admits such a decomposition if and only if it is chordal, admits a normal spanning tree, and does not contain $\mathcal{H}$---an infinite clique with two non-adjacent dominating vertices---as an induced minor. Combined with the characterisation of graphs with normal spanning trees, this yields a description by three types of forbidden minors.

Both proofs proceed via greedy constructions of length $\omega$, with the key new ingredient for the second result being an Extension Lemma that uses a finiteness theorem of Halin on minimal separators to produce suitable finite clique extensions at each step.

\end{abstract}

\vspace*{-12pt}
\maketitle

\section{Introduction}

A graph is \defn{chordal} if it has no induced cycle of length four or more.
The finite chordal graphs are precisely the graphs admitting tree-decompositions into (maximal) cliques; see e.g.~\cite[Proposition~12.3.6]{bibel}.
 For infinite graphs, however, this is no longer true. 
 While every graph with a tree-decomposition into cliques is chordal, some countable chordal graphs do not admit a tree-decomposition into cliques, as observed by Halin \cite{halin1984representation}. In a positive direction, Halin established the following useful sufficient condition for an infinite chordal graph to have a tree-decomposition into cliques:

\begin{theorem}[Halin 1984]
\label{thm_main_chordalHalin}
Every chordal graph without infinite clique has a tree-decomposition into maximal cliques. 
\end{theorem}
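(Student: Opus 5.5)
We build the decomposition greedily in $\omega$ steps. Since every clique is finite and the graph need not be countable, we first reduce to the connected case: a tree-decomposition of a disconnected graph is obtained by joining decompositions of its components along a new empty-bag node, or arbitrarily, since the adhesion sets are empty. So let $G$ be connected and chordal with no infinite clique. Then every clique lies in a maximal clique, and every maximal clique is finite.

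The key finite fact we use is this. In a chordal graph every minimal vertex separator is a clique (Dirac). We also need, for a finite clique $K$ and a component $C$ of $G-K$, that $N(C)\subseteq K$ is a clique. Hence there is a vertex $v\in C$ adjacent to all of $N(C)$, by the standard simplicial-type argument applied to the chordal graph $G[C\cup N(C)]$. So $N(C)\cup\{v\}$ is a clique, and it extends to a maximal clique $K'$ of $G$ with $K'\subseteq N(C)\cup C$.

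The key step is justifying the existence of $v$ in the infinite setting. I would argue as follows. Since $N(C)$ is a finite clique, pick $v\in C$ with $|N(v)\cap N(C)|$ maximal; this is possible because the value is bounded by $|N(C)|$. Suppose some $x\in N(C)$ is not adjacent to $v$. Take a shortest path $P$ in $C$ from $v$ to a neighbour of $x$. Together with $x$ and a common neighbour in $N(C)$, this yields either a chord giving a vertex with strictly more neighbours in $N(C)$, or an induced cycle of length at least $4$. This contradicts maximality or chordality, and the argument only uses finite paths, so it works verbatim for infinite $G$.

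Construction. Fix a root maximal clique $K_0$ and an enumeration-free layered process. At step $n$ we have a subtree $T_n$ with bags being maximal cliques, whose union $V_n$ satisfies two conditions: every component $C$ of $G-V_n$ has $N(C)$ contained in a single bag $B_t$, and $N(C)$ is a clique. For each such component $C$ we choose $K'_C$ as above and attach a new node with bag $K'_C$ to $t$. The new adhesion $K'_C\cap B_t\supseteq N(C)$ is fine. The components of $G-(V_n\cup K'_C)$ inside $C$ have neighbourhoods contained in $K'_C$: a vertex of $N(C)$ adjacent to such a component $D$ lies in $K'_C$, and the other neighbours are in $K'_C\cap C$. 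This preserves the invariant, and the tree-decomposition axioms for $T_{n+1}$ follow because each new bag meets the old part only inside $B_t$.

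The main obstacle is showing that the union $T=\bigcup T_n$ covers all of $G$; edges are then covered automatically, since each component's neighbourhood lies in one bag. For a vertex $u$ at distance $d$ from $K_0$, I would show that $u\in V_{d}$. Each new clique $K'_C$ contains a vertex $v\in C$ adjacent to $N(C)$, but $v$ need not be the vertex of $C$ nearest to $K_0$, so distance does not obviously decrease. To fix this, choose $v$ among vertices of $C$ adjacent to all of $N(C)$ so as to also include, among the neighbours of $N(C)$ in $C$, as many as possible. More simply, I would argue that every neighbour of $N(C)$ in $C$ gets absorbed within finitely many steps, bounded by $|N(C)|$, because each round the component containing it has a strictly larger intersection of its neighbourhood with the old separator. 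An induction on distance then gives that every vertex eventually lies in some bag. If this bounding fails, the fallback is to work with a breadth-first layering and pick $K'_C$ containing a vertex of minimal distance in $C$, which the maximality argument above can also accommodate.
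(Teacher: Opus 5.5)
Your construction and your local step match the paper's. The invariant that each component of $G-V_n$ has its neighbourhood inside one bag is the paper's invariant, and choosing $v\in C$ to maximise $|N(v)\cap N(C)|$, then arguing via a shortest path and a chordless cycle, is \cref{lem_chordal_1}. The genuine gap is the step you yourself call the main obstacle: you never prove $\bigcup_n V_n=V(G)$, and the arguments you sketch for it are false.

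Take a fan: a vertex $s$ joined to every vertex of a path $v_1v_2\cdots v_m$. This graph is chordal and has no infinite clique. Choose $K_0=\{s,v_1,v_2\}$. Then $G-V_i$ has the single component $\{v_{i+3},\dots,v_m\}$, with neighbourhood $\{s,v_{i+2}\}$. Its only vertex adjacent to both is $v_{i+3}$, so the construction is forced, and $v_m$ enters only at stage $m-2$. Yet $v_m$ is at distance $1$ from $K_0$ and is adjacent to $s\in N(C)$ for the first component $C$, where $|N(C)|=2$. So neither ``$u\in V_d$ for $u$ at distance $d$'' nor ``absorbed within $|N(C)|$ steps'' holds. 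No choice rule, including your breadth-first fallback, can rescue a distance induction, because here there are no choices at all. Note also that your covering sketch never uses the absence of infinite cliques, although exhaustion genuinely depends on that hypothesis.

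The missing idea is the paper's argument by contradiction. Suppose $D$ is a component of $G-\bigcup_n V_n$, and let $D_n\supseteq D$ be the component of $G-V_n$ containing it.
\begin{enumerate}
\item $N(D)$ is a clique. Any $x,y\in N(D)$ lie in $V_n$ for large $n$, hence in $N(D_n)$, which is contained in a single bag.
\item $N(D)\not\subseteq V_n$ for every $n$. If $N(D)\subseteq V_n$, then $D=D_n$ is itself a component at stage $n$. The bag added for it at stage $n+1$ then contains a vertex of $D$, a contradiction.
\end{enumerate}
Since the $V_n$ increase, $N(D)$ is therefore an infinite clique, contradicting the hypothesis. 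This replaces all distance bookkeeping, and it is exactly where ``no infinite clique'' is used.

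A minor point: in your reduction to connected graphs, join the component trees by arbitrary edges rather than through an empty bag, since an empty bag is not a maximal clique.
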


K{\v r}{\'i}{\v z} and Thomas \cite{kvrivz1990clique} observed that the conclusion of Theorem~\ref{thm_main_chordalHalin} extends to chordal graphs in which any two maximal cliques intersect in at most $k$ vertices, for some fixed $k \in \mathbb{N}$.

A \defn{comb of cliques} in a graph $G$ consists of a countably infinite clique $\{v_i \colon i \in \N\}$ together with, for each $i > 1$, a vertex adjacent in $G$ to all of $v_1, \ldots , v_{i-1}$ but not to $v_i$. 
Chudnovsky, Nguyen, Scott and Seymour \cite{chudnovsky2025vertex} unified Halin's and K{\v r}{\'i}{\v z} \& Thomas's sufficient conditions by showing that every chordal graph without a comb of cliques has a tree-decomposition into maximal cliques. 

Our first main result strengthens these sufficient conditions even further. 
A \defn{strict comb of cliques} in a graph $G$ consists of a countably infinite clique $\{v_i \colon i \in \N\}$ together with, for each $i > 1$, a vertex adjacent in $G$ to all of $v_1, \ldots , v_{i-1}$, but to none of the vertices $v_i,v_{i+1},v_{i+2},\ldots$.

\begin{theorem}
\label{thm_main_chordal}
Every chordal graph without a strict comb of cliques has a tree-decomposition into maximal cliques. 
\end{theorem}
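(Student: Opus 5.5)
We first reduce to connected graphs: a tree-decomposition of each component into maximal cliques can be glued along new tree edges, since adjacent bags from different components may intersect in the empty set. Next, since the abstract announces greedy constructions of length $\omega$, we reduce to countable graphs in some form. A chordal graph without a strict comb of cliques is not obviously countable, so instead we plan to build the decomposition tree $T$ as a union of finite subtrees $T_0\subseteq T_1\subseteq\cdots$, working inside a countable structure of maximal cliques, and handle uncountable graphs by a closure or elementary-submodel argument if necessary. Concretely, fix an enumeration (or, in the uncountable case, a well-ordered exhaustion) of the vertices. At step $n$ we maintain a finite tree $T_n$ whose bags are maximal cliques of $G$, satisfying two invariants: the bags of $T_n$ form a tree-decomposition of the induced subgraph on their union, and every component $C$ of $G$ minus that union attaches to a single bag. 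More precisely, $N(C)$ should be contained in some bag $K$, and $N(C)$ should be a minimal separator.

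The extension step takes the first vertex $v$ not yet covered, lying in a component $C$ with $N(C)\subseteq K$. Using chordality, $N(C)$ is a clique (minimal separators in chordal graphs are cliques, which also holds for infinite graphs via the finite cycle argument). We then choose a maximal clique $K'\supseteq N(C)$ that meets $C$ and is ``closer'' to $v$. Attaching $K'$ to $K$ preserves the invariants, and the new components inside $C$ again have clique neighbourhoods contained in single bags. To guarantee that $v$ is eventually covered, we pick $K'$ along a shortest path from $N(C)$ to $v$, so that the distance from $v$ to the union of bags drops at each step dedicated to $v$.

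The crucial and hardest point is the existence of the maximal clique $K'$: that is, a maximal clique containing $N(C)$ together with a vertex of $C$. In infinite graphs, Zorn's lemma always gives maximal cliques, so existence is easy. The real difficulty is that the ``limit'' of the construction may fail. A vertex $v$ may lie in components whose attachment cliques $N(C_n)$ form an increasing chain, so that $v$ is never placed into a bag. This is exactly the situation producing a comb of cliques. We plan to show that if the chosen attachments $N(C_0)\subsetneq N(C_1)\subsetneq\cdots$ all lie in a component still containing $v$, then picking $v_i\in N(C_i)\setminus N(C_{i-1})$ and witnesses $c_i\in C_{i-1}\setminus C_i$ adjacent to $N(C_{i-1})$ but separated from the later $v_j$ yields a \emph{strict} comb of cliques. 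The strictness comes from $c_i$ lying outside $C_i$ and hence being non-adjacent to everything beyond the separator, by the separation property. This contradicts the hypothesis.

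For this argument to work, we must choose $K'$ cleverly: the greedy choice should maximise $K'\cap N(\text{next component})$ in a way that forces strict nesting of the separators. We expect the main obstacle to be making the strict comb genuinely strict, namely ensuring that each $c_i$ misses \emph{all} of $v_i,v_{i+1},\dots$ and not just $v_i$. We plan to handle this by always taking $c_i$ from a component that is subsequently discarded from the region containing $v$. Finally, the union $T=\bigcup T_n$ is a tree, bags are maximal cliques, and the tree-decomposition axioms hold because they hold at each finite stage and every vertex and edge appears at some finite stage, by the no-strict-comb argument.
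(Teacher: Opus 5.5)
There is a genuine gap, and it sits at the step you call easy. A maximal clique $K'\supseteq N(C)$ that meets $C$ exists if and only if some vertex $c\in C$ is adjacent to \emph{all} of $N(C)$. Zorn's lemma only extends a clique you already have, so it does not produce such a $c$. When $N(C)$ is infinite, such a vertex need not exist in a chordal graph. This is the first place where the no-strict-comb hypothesis must enter.

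The paper handles this in \cref{lem_chordal_1} and \cref{cor_chordal_2}. For $v\in C$ with $N_S(v)\subsetneq S=N(C)$, let $w$ be the penultimate vertex of a shortest path from $v$ to $S\setminus N(v)$ inside $C\cup(S\setminus N(v))$; chordality gives $N_S(v)\subsetneq N_S(w)$. So if no vertex of $C$ is complete to $S$, one obtains $w_1,w_2,\dots$ with strictly increasing $N_S(w_i)$. Choosing $v_{i+1}\in N_S(w_{i+1})\setminus N_S(w_i)$ then yields a strict comb.

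Your shortest-path device fails for the same reason: the successor of $N(C)$ on a shortest $N(C)$--$v$ path need not be complete to $N(C)$, so the distance to $v$ need not drop. In fact, your two claims together would cover every vertex after finitely many dedicated steps without ever using the hypothesis. That would prove the theorem for all countable chordal graphs, contradicting Halin's counterexample. It also contradicts your own next paragraph, where $v$ is never covered.

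The limit argument is also not yet a proof.
\begin{enumerate}
\item A surviving component does not yield an increasing chain $N(C_0)\subsetneq N(C_1)\subsetneq\cdots$, since neighbourhoods of nested components can lose vertices. What one actually gets is a component $D$ of $G-\bigcup_i G^i$ such that $N(D)$ is an infinite clique. Each vertex of $N(D)$ lies in $N(D_j)\subseteq V_{t_j}$ for all large $j$.
\item Your ``separation property'' cannot give strictness. It only controls adjacency to vertices strictly inside $C_i$, not to the separator vertex $v_i\in N(C_i)$. Worse, the natural witness (the vertex of $C_{i-1}$ placed into the new bag $K'$) lies in the clique $K'\supseteq N(C_i)\ni v_i$, so it does not miss $v_i$.
\end{enumerate}

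In the paper the witness comes from \emph{maximality of the bags}:
\begin{enumerate}
\item Let $i_n$ be minimal with $v_1,\dots,v_{n-1}\in G^{i_n}$, and pick $v_n\in N(D)\setminus G^{i_n}$.
\item $v_n$ is complete to $\{v_1,\dots,v_{n-1}\}\subseteq V_{t_{i_n}}$ but does not lie in this maximal clique. Hence some $w\in V_{t_{i_n}}$ is non-adjacent to $v_n$.
\item If $w$ were adjacent to some $v_{n+k}\in D_{i_{n+k}}$, then both $w$ and $v_n$ would lie in $N(D_{i_{n+k}})\subseteq V_{t_{i_{n+k}}}$. They would then be adjacent, a contradiction.
\end{enumerate}

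Finally, you need no countability reduction. Building finite trees one bag per step cannot work anyway, for instance for an uncountable star. The paper attaches a new bag to every component of $G-G^i$ simultaneously, so a construction of length $\omega$ works for graphs of any cardinality.
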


We shall prove Theorem~\ref{thm_main_chordal} via a simple greedy construction of length $\omega$, similar in spirit to the existence proof of normal spanning trees in \cite{pitz2020unified}.

Still, it remains a well-known open problem to characterise which infinite chordal graphs do have a tree-decomposition into (maximal) cliques, as obtained by Diestel in \cite{SimplicialMinors} for the countable setting.
In fact, his work addresses the broader conjecture of describing the (countable) graphs admitting \textit{simplicial tree-decompositions into primes}, i.e., tree-decompositions whose adhesion sets are complete and whose bags induce graphs that are not separated by cliques.
Incidentally, stating a main question in the theory of simplicial decompositions (see Section 2 in \cite{DecomposingInfiniteGraphs}), it is even unknown when these bags can be obtained as finite graphs. 
Since prime induced subgraphs of chordal graphs are actually cliques, the second main result of this paper contributes towards this problem by characterising precisely which chordal graphs admit a tree-decomposition into finite cliques (note that the strict comb of cliques has such a tree-decomposition).

In order to state this description, we recall that a \defn{minor embedding} of a graph $H$ into a graph $G$ is a family $\{B_v: v \in V(H)\}$ of disjoint connected subgraphs of $G$ (called \defn{branch sets}) such that there is a $B_u{-}B_v$ edge in $G$ whenever $uv\in E(H)$. 
In this case, $H$ is said to be a \defn{minor} of $G$, while the subgraph induced by $\bigcup_{v\in V(H)}B_v$ is often referred to as the corresponding \textit{model} of $H$ in $G$.  
On the other hand, if there is a $B_u{-}B_v$ edge in this minor model \textit{precisely} when the edge $uv\in E(H)$ exists, then $H$ is called an \defn{induced minor}\footnote{When restricting the setting of Diestel from \cite{SimplicialMinors} to a chordal graph $G$, we remark that a minor $H$ of $G$ is \textit{simplicial} if it is induced in this sense and the connected components of the complement of the corresponding model have complete neighbourhoods.} of $G$.

As illustrated by \cref{fig:obsii}, let $\obsii$ be the graph obtained from an infinite clique by adding two non-adjacent vertices that dominate it. Within these terms, the second main result of this paper reads as follows:

\begin{theorem}
\label{thm_main_chordalfinite}
    A connected graph admits a tree-decomposition into finite cliques if, and only if, it is chordal, admits a normal spanning tree and does not contain $\obsii$ as an induced minor.
\end{theorem}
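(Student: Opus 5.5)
The forward direction I plan to treat as routine. Suppose $(T,\mathcal V)$ is a tree-decomposition of $G$ into finite cliques. Then $G$ is chordal, because any induced cycle of length at least four would have to lie in a single bag, and the bags are cliques. For the normal spanning tree I will use the known fact that a connected graph with a tree-decomposition into finite bags has a normal spanning tree. Alternatively, following Jung's criterion, I will write $V(G)$ as a countable union of dispersed sets: root $T$, and let the $n$-th set be the union of the bags at height $n$. Finally, suppose $\obsii$ were an induced minor, with branch sets $B_a,B_b$ for the two dominating vertices and $B_i$ for the clique vertices. Every adhesion set is finite and complete. Infinitely many disjoint connected $B_i$ that pairwise touch cannot all be separated by one finite adhesion set, so some node $t$ has a bag meeting all but finitely many $B_i$. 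This is the standard argument that infinite clique minors "live" at an end or a node; here the bags are finite, so we get an end $\omega$ of $T$. I then push $B_a$ and $B_b$ toward $\omega$. Both must meet the adhesion sets along the ray towards $\omega$ eventually, since they touch infinitely many $B_i$ lying beyond every adhesion set. Adhesion sets are cliques, so a vertex of $B_a$ and a vertex of $B_b$ in a common bag are adjacent. That gives a $B_a$–$B_b$ edge, a contradiction.

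For the backward direction, I aim for a greedy construction of length $\omega$, as in Theorem~\ref{thm_main_chordal}. Fix a normal spanning tree and hence an enumeration $v_0,v_1,\dots$ adapted to it; more concretely, use the countable dispersed-set decomposition, or simply work with countable $G$ first. I build finite subtrees $T_0\subseteq T_1\subseteq\cdots$ with bags that are finite cliques, maintaining two invariants. First, the union $G_n$ of the bags is an induced subgraph whose components of $G-G_n$ each have a neighbourhood contained in a single bag of $T_n$. Second, that neighbourhood is a finite clique. At step $n$, for each component $C$ of $G-G_n$ with attachment clique $N(C)$ (the components containing the next vertex from the enumeration come first), I add a new finite clique $K\supseteq N(C)$ with $K\setminus N(C)\subseteq C$. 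I attach $K$ to the bag containing $N(C)$. Then I have to check that each component of $C-K$ again has a neighbourhood inside $K$.

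The key step, and the main obstacle, is an Extension Lemma. Given a component $C$ with finite clique neighbourhood $S$ and a target vertex $v\in C$, it should produce a finite clique $K\supseteq S$ in $G[S\cup C]$ that lets $v$ eventually be absorbed, and such that every component $D$ of $C-K$ has $N(D)$ a finite clique contained in $K$. In a chordal graph, $N(D)\cap(S\cup K)$ is a minimal separator and hence a clique. The danger is infinite neighbourhoods: a component $D$ might see infinitely many vertices in an infinite clique. I intend to use Halin's finiteness theorem on minimal separators, which says that in a graph with a normal spanning tree the minimal separators are finite or the graph contains suitable structures, together with the exclusion of $\obsii$. If some minimal separator between two parts were infinite, it would be an infinite clique. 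Both sides would then contain vertices dominating it, after contracting connected pieces into single vertices using the neighbourhood structure. That yields $\obsii$ as an induced minor. So all the relevant separators are finite cliques. I would first choose $K$ as $S$ together with a finite clique on a shortest path from $S$ to $v$, and then apply the finiteness to close it up. I have to make sure that the process absorbs every vertex eventually; the normal spanning tree order is used here, so that each vertex is absorbed within finitely many steps. The union of the $T_n$ is then the desired tree-decomposition. The bag axioms hold because every edge lies either within a bag or between a component and its attachment clique, and that edge is absorbed when the component is processed.
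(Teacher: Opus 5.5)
Your forward direction is fine. Your end argument for $\obsii$ works: orient each edge of $T$ towards the side of its finite adhesion set that contains almost all $B_i$; with finite bags this orientation points to an end, and $B_a$ and $B_b$ must both eventually meet the clique adhesion sets along that ray. This is a valid direct alternative to the paper's route, which shows that $\obsii$ itself has no such decomposition and then passes to induced minors.

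The backward direction, however, has a genuine gap exactly where the work lies. The danger you name is not the real one: once $K\supseteq S$ is a finite clique, every component of $C-K$ automatically has its neighbourhood inside $K$. The real danger is the limit. After $\omega$ steps, a component $D$ whose neighbourhood is an infinite clique may survive. To exclude this you need a precise one-step progress property, and you give neither the property (``lets $v$ eventually be absorbed'' is not one) nor a construction. The phrase ``$S$ together with a finite clique on a shortest path from $S$ to $v$'' is not well defined, since a clique containing $S$ can only use vertices complete to $S$, and $v$ need not be one. It also produces no decreasing quantity. The paper's Extension Lemma supplies the right property: a finite clique $B\supseteq S$ such that either $v\in B$ or the component of $G-B$ containing $v$ is unattached to $S$. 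Its proof uses Halin's theorem in its actual form: if two non-adjacent vertices have infinitely many finite minimal separators, they have an infinite one. (The theorem says nothing about normal spanning trees.) With no induced $\obsii$, this makes the set of minimal $v$--$a$ separators with $a\in S\setminus N(v)$ finite. One then picks such a separator $S_*$ whose $v$-side is $\subseteq$-maximal and shows that $S_*\cup S$ is a clique. Otherwise, a minimal $s_*$--$a$ separator for a non-adjacent pair $s_*\in S_*$, $a\in S$ would give a strictly larger $v$-side.

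Second, the exhaustion argument is missing. The sentence ``the normal spanning tree order is used so that each vertex is absorbed within finitely many steps'' is exactly what has to be proved. An enumeration $v_0,v_1,\dots$ is not available either, since graphs with normal spanning trees may be uncountable; also, your $T_n$ need not be finite. The paper instead targets, in every component $C$, its $\leq$-least vertex $v_C$. Suppose a component $D$ survives, and let $C_i\supseteq D$ be the components of $G-G^i$. The targets $v_{C_i}$ increase in the tree order below the least vertex of $D$, so they are eventually constant. This constant target is never put into a bag, so the progress property gives $u_i\in S_i\setminus N(C_{i+1})$ for all large $i$. Consecutive $u_i,u_{i+1}$ lie in the clique $B_i$, so $H=\{u_i\}$ is connected and has no neighbour in $D$. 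Meanwhile, every vertex of the infinite clique $N(D)$ is adjacent to $u_i$ for all large $i$. Thus every $H$--$D$ separator avoiding $H\cup D$ is infinite, and the separator characterisation of induced $\obsii$-minors gives a contradiction. Your plan lacks this second use of the exclusion of $\obsii$, which is what makes the $\omega$-step construction exhaust $G$.
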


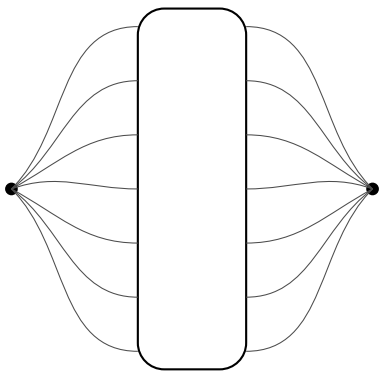
\begin{figure}
    \centering
    \begin{tikzpicture}[scale=1.2]

\coordinate (L) at (-2,0);
\coordinate (R) at ( 2,0);

\draw[thick, rounded corners=10pt]
  (-0.6,-2) rectangle (0.6,2);

\fill (L) circle (2pt);
\fill (R) circle (2pt);

\foreach \t in {-1.8,-1.2,-0.6,0,0.6,1.2,1.8}
{
    \pgfmathsetmacro{\ang}{20 + 25*abs(\t)/1.8}

    \pgfmathsetmacro{\sign}{ifthenelse(\t>=0,1,-1)}

    \pgfmathsetmacro{\leftangle}{\sign*\ang}

    \pgfmathsetmacro{\rightangle}{180 - \sign*\ang}

    \draw[black!70]
      (L) to[out=\leftangle, in=180] (-0.6,\t);

    \draw[black!70]
      (0.6,\t) to[out=0, in=\rightangle] (R);
}

\end{tikzpicture}
\caption{The graph $\obsii$, where the rectangle in the picture represents an infinite clique on which the two distinguished and non-adjacent vertices have their (common) neighbourhood.}
 \label{fig:obsii}
\end{figure}

Pitz \cite{pitz2021proof} characterised graphs with normal spanning trees via two types of forbidden minors. Combined with \cref{thm_main_chordalfinite}, this yields a characterisation in terms of three types of forbidden minors.

Again, our proof of Theorem~\ref{thm_main_chordalfinite} proceeds via a simple greedy construction of length $\omega$. In every step, the assumption on $\obsii$ is used to show that there is a suitable extension of the partial tree-decomposition into each currently remaining component. During this process, the normal spanning tree is used to argue that the whole graph is exhausted after $\omega$ steps.

Incidentally, as we shall later detail through \cref{lem:Necessity}, it is not difficult to see that both criteria from the characterisation provided by \cref{thm_main_chordalfinite} are in fact necessary for the existence of tree-decompositions into finite cliques.
For instance, the existence of a normal spanning tree in a connected graph $G$ is equivalent to $G$ having a tree-decomposition into finite parts. 
On the other hand, the graph $\obsii$ itself does not admit a tree-decomposition $(T,\mathcal{V})$ into finite cliques: otherwise, the two (non-adjacent) dominating vertices of $\obsii$ would live in different decomposition bags, thus needing to be separated from each other by some finite adhesion set of $(T,\mathcal{V})$.
Therefore, since the existence of tree-decompositions into (finite) cliques is inherited by induced minors (as ensured by Lemmas 12.3.2 and 12.3.3 in \cite{bibel}, for example), no graph admitting such decompositions can contain an induced minor copy of $\obsii$.

For background on tree-decompositions, we refer the reader to the textbook \cite[\S12]{bibel}.

\section{A sufficient condition for tree-decompositions into cliques}
\label{sec_2}

Given a graph $G$, write $N(v)$ for the neighbourhood of a vertex $v$ and, given $S \subset V(G)$, we abbreviate $N_S(v) = N(v) \cap S$ for the neighbours of $v$ in $S$.
We recall the following folklore lemma on clique separators in chordal graphs:

\begin{lemma}
\label{lem_chordal_1}
Let $K \subseteq G$ be a clique in a connected, chordal graph $G$.
Let $C$ be a component of $G - K$, and write $S = N(C)$ for the set of neighbours of $C$. 
Then, for every vertex $v \in C$ with $N_S(v) \subsetneq S$
there is $w \in C$ with  $N_S(v) \subsetneq N_S(w)$.
\end{lemma}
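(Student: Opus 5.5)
Since $N_S(v) \subsetneq S$, we can fix a vertex $s \in S \setminus N_S(v)$. As $s \in S = N(C)$, it has a neighbour in $C$, and since $C$ is connected we can choose a shortest path $P = v x_1 \dots x_k$ in $C$ from $v$ to a vertex $x_k$ adjacent to $s$. Here $k \geq 1$ because $s \notin N(v)$. The goal is to show that $w := x_1$ works, i.e. that $N_S(v) \subseteq N_S(x_1)$. This suffices: once the inclusion holds, it is strict. If $k=1$, then $s \in N_S(x_1)\setminus N_S(v)$. If $k \geq 2$, we will show more generally that every $x_i$ is adjacent to all of $N_S(v)$, and then also apply the argument below to $s$ itself.

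The key step is the following claim: if $u \in N_S(v)$, then $u$ is adjacent to every vertex of $P$, and in particular to $x_k$. Suppose not, and let $u$ be a counterexample. The set $\{u, s\}$ lies in the clique $K$ (note $S \subseteq K$ since $C$ is a component of $G-K$), so $us$ is an edge, or $u = s$; the latter is impossible since $s \notin N(v)$. Now take the first index $i$ such that $u x_i \notin E(G)$, and let $j>i$ be the next index with $u x_j \in E(G)$, where we may take $x_{k+1} := s$ as an endpoint that is adjacent to $u$. The walk $u\, x_{i-1} x_i \cdots x_j\, u$ (with $x_0 = v$) is then a cycle of length at least $4$. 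Its chords can only join $u$ to interior vertices, which is excluded by the choice of $i,j$, or join two path vertices. Since $P$ is a shortest path to a neighbour of $s$, it is induced, and no $x_l$ with $l<k$ is adjacent to $s$. Hence this cycle is induced, contradicting chordality.

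Applying the claim to all $u \in N_S(v)$ shows that $x_1$ is adjacent to all of $N_S(v)$. For strictness, when $k \geq 2$ we run the same argument with $s$ added. The simpler route is to take $w := x_k$: the claim gives $N_S(v) \subseteq N_S(x_k)$, and $s \in N_S(x_k)\setminus N_S(v)$, so the inclusion is strict. I therefore choose $w = x_k$.

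The main obstacle is the bookkeeping in the induced-cycle argument, namely ensuring that the cycle has length at least $4$ and has no chords. This relies on $P$ being induced, on the minimality of $k$, which rules out edges $s x_l$ for $l<k$, and on $u s$ being an edge inside $K$.
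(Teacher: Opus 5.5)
Your final argument with $w = x_k$ is correct and is essentially the paper's proof. The paper also takes a shortest path from $v$ towards $S \setminus N(v)$ (minimising over all of $S \setminus N(v)$ at once rather than fixing one $s$), lets $w$ be its last vertex in $C$, and turns a vertex of $N_S(v)$ that misses $w$ into a chordless cycle of length at least $4$ by choosing a suitable window of the path, just as your indices $i,j$ do.

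You should delete the opening detour via $w = x_1$. For $k \geq 2$ the inclusion $N_S(v) \subseteq N_S(x_1)$ need not be strict, and your claim cannot be ``applied to $s$'' since $s \notin N(v)$.
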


\begin{proof}
Consider a vertex $v \in C$ with $N_S(v) \subsetneq S$. In the subgraph of $G$ induced by $C \cup (S \setminus N(v))$, choose a shortest path $P$ from $v$ to $S \setminus N(v)$.
Let $s_w \in S \setminus N(v)$ be the last vertex of $P$, and let $w$ be the penultimate vertex on $P$ (which exists, since $v$ is not adjacent to $s_w$).

We claim that $w$ satisfies the assertion of the lemma.
Suppose otherwise. Then there is $s_v \in N_S(v)$ with $s_v \notin N(w)$.  
As $S$ is complete, the vertices $s_v$ and $s_w$ are adjacent, and so $vP w s_w s_v v$ is a cycle, so it has a chord.
By the minimality of $P$ it follows that $P$ has no chords; so every chord of this cycle is incident with $s_v$. Let $u$ be the neighbour of $s_v$ on $Pw$ that is closest to $w$. Then $u P s_ws_vu$ is a chordless cycle of length $\geq 4$, contradicting that $G$ is chordal.
\end{proof}

\begin{corollary}
\label{cor_chordal_2}
Let $K \subseteq G$ be a clique in a connected, chordal graph $G$, and $C$ a component of $G-K$ with neighbourhood $S = N(C)$.
If $G$ contains no strict comb of cliques, then $C$ contains a vertex $v$ with $N_S(v) = S$.
\end{corollary}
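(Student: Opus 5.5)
We argue by contradiction: assuming no vertex of $C$ sees all of $S$, we iterate \cref{lem_chordal_1} to build a strict comb of cliques.

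\textbf{Finite case.} First note that $S = N(C) \subseteq K$ is a clique, and it is nonempty since $G$ is connected (if $S=\emptyset$ then $K=\emptyset$ and $C=G$, and the claim is trivial). Suppose every $v\in C$ has $N_S(v)\subsetneq S$. Start from any $w_1\in C$ and apply \cref{lem_chordal_1} repeatedly to get $w_1,w_2,\dots\in C$ with
\[
N_S(w_1)\subsetneq N_S(w_2)\subsetneq\cdots,
\]
each a proper subset of $S$. If $S$ is finite this chain must terminate, which is a contradiction. So from now on $S$ is infinite.

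\textbf{Building the comb.} We build inductively a sequence of distinct vertices $s_1,s_2,\dots\in S$ and vertices $u_2,u_3,\dots\in C$ such that each $u_i$ is adjacent to $s_1,\dots,s_{i-1}$ but to none of $s_i,s_{i+1},\dots$. The $s_j$ form a countably infinite clique, because $S$ is complete. Together with the $u_i$ they then form a strict comb of cliques, a contradiction.

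The construction is as follows. Pick any $u\in C$. Since $N_S(u)\ne S$, choose $s_1\in S\setminus N(u)$ and keep this $u$ as a ``witness''. By \cref{lem_chordal_1}, applied to a vertex adjacent to $s_1$ if one exists, we may find some $u_2\in C$ adjacent to $s_1$. Indeed $s_1\in N(C)$, so some vertex of $C$ is adjacent to $s_1$, and no lemma is even needed for this.

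\textbf{Main obstacle.} The difficulty is that a later choice $s_j$ must be a non-neighbour of \emph{all} earlier $u_i$ with $i\le j$. A naive construction adds $s_j$ outside $N(u_j)$ only, while $u_i$ for $i<j$ might be adjacent to $s_j$. To handle this, we strengthen the induction hypothesis. At step $n$ we keep $u_2,\dots,u_n$ together with $s_1,\dots,s_{n-1}$, and we also maintain a vertex $x_n\in C$ satisfying
\[
N_S(x_n)\supseteq N_S(u_2)\cup\dots\cup N_S(u_n)\cup\{s_1,\dots,s_{n-1}\}.
\]
We choose $s_n\in S\setminus N(x_n)$, which is possible since $N_S(x_n)\ne S$. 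Then $s_n$ is automatically a non-neighbour of every earlier $u_i$. Next set $u_{n+1}$ to be a vertex of $C$ adjacent to $s_1,\dots,s_n$ and to all of $N_S(x_n)$, but still with $N_S(u_{n+1})\subsetneq S$.

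The remaining task is to produce vertices whose $S$-neighbourhood contains a given set of the form $N_S(x)\cup\{s\}$. The key sublemma is: \emph{for any $x\in C$ and $s\in S$ there is $y\in C$ with $N_S(x)\cup\{s\}\subseteq N_S(y)$.} We expect this to follow from the shortest-path argument in the proof of \cref{lem_chordal_1}. Take a shortest path from $x$ to $s$ through $C$. Its penultimate vertex $y$ sees $s$ and, by the same chordless-cycle argument, sees all of $N_S(x)$; otherwise a chordless cycle of length $\ge4$ would arise through $s$ and the missed vertex of $N_S(x)$. If $y$ happens to see all of $S$ we are done anyway.

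With the sublemma, $u_{n+1}$ may be taken as $x_{n+1}$ as well. Every $u_i$ has $N_S(u_i)\subseteq N_S(u_{n})\subseteq N_S(x_n)$ by monotonicity, so $s_n\notin N(u_i)$ for all $i\le n$. Later $s_m$ for $m>n$ are also non-neighbours of $u_i$, for the same reason. This completes the induction and yields the strict comb.
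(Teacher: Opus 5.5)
Your argument is correct and builds the same comb as the paper's proof: your $u_2,u_3,\dots$ have strictly increasing $S$-neighbourhoods, $s_n\in N_S(u_{n+1})\setminus N_S(u_n)$, and $u_i$ is the witness for index $i$. The only difference is the order of choices. The paper first obtains the chain $N_S(w_1)\subsetneq N_S(w_2)\subsetneq\cdots$ from \cref{lem_chordal_1} and then picks the clique vertices from successive differences, so monotonicity alone settles your `main obstacle'; your targeted sublemma (true, by the shortest-path argument you sketch), the auxiliary $x_n$ and the separate finite case are not needed.
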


\begin{proof}
If there is no such vertex $v$, then by Lemma~\ref{lem_chordal_1} there exist vertices $w_1,w_2,w_3,\ldots$ in $C$ with
$N_S(w_1) \subsetneq N_S(w_{2}) \subsetneq \cdots.$
Pick $v_1 \in N_S(w_1) $ and $v_{i+1} \in N_S(w_{i+1}) \setminus N_S(w_{i})$ for all $i \geq 1$.
Then $\{v_i \colon i \in \N\} \subseteq S \subseteq K$ induces a countably infinite clique, and for each $i > 1$, the vertex $w_{i-1}$ is adjacent to all of $v_1, \ldots , v_{i-1}$, but to no vertex of $v_i,v_{i+1},v_{i+2},\ldots$,
 yielding a strict comb of cliques, a contradiction.
\end{proof}

\begin{proof}[Proof of \cref{thm_main_chordal}]
We may assume that the graph is connected.
Our plan is to define the desired tree-decomposition $(T,\cV)$ level by level. 
    We do this by constructing induced subgraphs $G^0 \subset G^1 \subset \dots$ of $G$ together with tree-decompositions $(T^i, \cV^i)$ of $G^i$ into distinct maximal cliques of $G$, so that the tree-decompositions extend each other in the sense that
    \begin{enumerate}[label=(\roman*)]
    \item \label{item:thm1:increasingTd}$T^0 \subseteq T^1 \subseteq T^2 \subseteq \dots$,
    \item \label{item:thm1:bagswelldefined} $V_t^i = V_t^j$ for all $i \leq j$ and $t \in T^i$.
    \end{enumerate}
    In particular, from now on we simply write $V_t$ for the part corresponding to a vertex $t \in T^i$. 
    To keep the construction going, we also ensure inductively that
    \begin{enumerate}[resume, label=(\roman*)]
    \item \label{item:thm1:compLeaf} for every component $C$ of $G-G^i$, its neighbourhood $N(C)$ is included in a part $V_{t_C}$ for a unique node $t_C \in T^i$ at height $i$.
    \end{enumerate}
    
    For the base case $i=0$, let $G^0$ be any maximal clique of $G$. Let $T^0$ be the tree on a single node $r$ and put $V_r := V(G^0)$.
        
    Next, assume that for some $i \in \N$ we have already constructed a tree-decomposition~$(T^i, \cV^i)$ of $G^i$ into maximal cliques. Let $C$ be a component of $G-G^i$ and write $S_C = N(C)$. By \cref{item:thm1:compLeaf}, the set $S_C$ is included in a (maximal) clique $V_{t_C}$ for a unique node $t_C \in T^i$ at height $i$. By \cref{cor_chordal_2}, there is a vertex $v \in C$ with $S_C \subseteq N(v)$. Extend $\{v \} \cup S_C$ to a maximal clique $K_C$ in $G$.
 Let $T^{i+1}$ be the tree obtained from $T^i$ by adding, for each component $C$ of $G-G^i$, a new successor $s_C$ to $t_C$, with corresponding part $V(K_C)$. 
Let $G^{i+1}$ be the subgraph of $G$ induced by all parts of $T^{i+1}$. It is clear that $(T^{i+1}, \cV^{i+1})$ is a tree-decomposition of $G^{i+1}$ satisfying \cref{item:thm1:increasingTd}--\cref{item:thm1:compLeaf}. 
 
      The desired tree-decomposition\ for \cref{thm_main_chordal} will be $T := \bigcup_{i \in \N} T^i$ with the parts $V_t$ for $t \in T$, which is well-defined due to \cref{item:thm1:increasingTd} and \cref{item:thm1:bagswelldefined}. Clearly, $(T, \cV)$ is a tree-decomposition\ of $\bigcup_{i \in \N} G^i$. To complete the proof, we argue that $\bigcup_{i \in \N} G^i = G$.    
      
    Suppose for a contradiction that $\bigcup_{i \in \N} G^i \neq G$. 
    Then there is a component $D$ of $G - \bigcup_{i \in \N} G^i$.
 For each $i \in \N$ let $D_i$ be the unique component of $G - G^i$ with $D_i \supseteq D$. 
    By \cref{item:thm1:compLeaf}, the neighbourhood $N(D_i)$ is included in $V_{t_i}$ for a unique node $t_i$ of $T$ at height $i$, and hence is a clique. It follows that $N(D)$ is a clique. Since $N(D) \setminus G^i \neq \emptyset$ for all $i \in \N$ (as otherwise, $D = D_i$, contradicting that in our construction, we extended the tree-decomposition into $D_i$), we have $|N(D)| = \infty$. This would complete a proof of Halin's \cref{thm_main_chordalHalin}. For \cref{thm_main_chordal}, we need one more argument.

    Choose a sequence $\{v_i \colon i \in \N\} \subseteq N(D)$ as follows: Let $v_1 \in N(D)$ be arbitrary, and suppose we have already selected $v_1,\ldots, v_{n-1}$. 
    Let $i_n \in \N$ be minimal so that $v_1,\ldots,v_{n-1} \in G^{i_n}$, and pick $v_{n} \in N(D) \setminus G^{i_n}$ arbitrarily. 
        Then the vertices $\{v_i \colon i \in \N\}$ induce a countably infinite clique in $G$. 
        We claim that for each $n > 1$, there is a vertex adjacent to all of $v_1, \ldots , v_{n-1}$ but not to $v_n,v_{n+1},v_{n+2},\ldots$. 
        Indeed, we have $v_1, \ldots , v_{n-1} \in N(D_{i_n})$, and so $v_1, \ldots , v_{n-1}$ all belong to $V_{t_{i_n}}$ but $v_n$ does not. 
    As $v_n$ is adjacent to all $v_1, \ldots , v_{n-1}$ (since $N(D)$ is complete), but is not contained in the maximal clique $V_{t_{i_n}}$, it follows that there is a vertex $w \in V_{t_{i_n}}$ which is adjacent to all $v_1, \ldots , v_{n-1} $ but not to $v_n$. 
    We claim that $w$ is adjacent to none of $v_{n+1}, v_{n+2}, \ldots$. Then we have found a strict comb of cliques, contrary to our assumption. 
    Towards this end, suppose for a contradiction that $w$ is adjacent to $v_{n+k}$ for some $k \geq 1$. 
    Then $w,v_1, \ldots , v_{n-1} \in N(D_{i_{n+k}}) \subseteq V_{t_{i_{n+k}}}$, and by choice of $i_{n+k}$, the clique $V_{t_{i_{n+k}}}$ also contains $v_n$; so $v_n$ is adjacent to $w$, giving the desired contradiction.  
\end{proof}

\section{Tree-decompositions into finite cliques}

In this section we prove our second main result, which we restate here for convenience:

\begin{theorem}\label{thm:TreeDecompositionConstruction}
    A connected graph $G$ admits a tree-decomposition into finite cliques if and only if it is chordal, admits a normal spanning tree and does not contain $\obsii$ as an induced minor.
\end{theorem}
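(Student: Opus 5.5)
Necessity is essentially done in the introduction. A tree-decomposition into cliques forces chordality, since an induced cycle of length at least four cannot live in a decomposition with complete parts. A tree-decomposition into finite parts of a connected graph yields a normal spanning tree. And $\obsii$ is excluded because the property is inherited by induced minors while $\obsii$ itself has no such decomposition. I would record this as a short lemma and spend the effort on sufficiency.

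For sufficiency I would mimic the proof of \cref{thm_main_chordal}: build induced subgraphs $G^0\subseteq G^1\subseteq\cdots$ with tree-decompositions $(T^i,\cV^i)$ into finite cliques, maintaining the analogues of \cref{item:thm1:increasingTd}--\cref{item:thm1:compLeaf}. The key invariant is that every component $C$ of $G-G^i$ has finite neighbourhood $N(C)$ contained in a single part $V_{t_C}$ at height $i$.

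The heart is an \emph{Extension Lemma}: given a component $C$ of $G-K$, where $K$ is a finite clique with $S=N(C)\subseteq K$, there is a finite set $X\subseteq C$ together with a finite tree-decomposition of $G[S\cup X]$ into finite cliques. This decomposition must have a part containing $S$, and every component of $C-X$ must have its neighbourhood inside a single part of it. In the simplest form $X$ is one vertex $v$ with $S\subseteq N(v)$; Lemma~\ref{lem_chordal_1} suggests repeatedly enlarging $N_S(v)$, which terminates because $S$ is finite. However, new components $D$ of $C-v$ then have neighbourhoods $N(D)\subseteq S\cup\{v\}$ that may be infinite through $v$'s neighbours in $C$.

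To force finite neighbourhoods, I would use chordality. Each $N(D)$ is a minimal separator in a chordal graph, hence a clique. Halin's finiteness theorem on minimal separators should then show that, absent an $\obsii$ induced minor, the relevant clique separators are finite. Concretely, suppose $N(D)$ were an infinite clique $Q$. Then $D$ contracts to a vertex dominating $Q$, and on the other side $v$, or a connected piece of $C\setminus D$ together with $S$, contracts to a second vertex dominating infinitely many vertices of $Q$ and non-adjacent to $D$. This produces $\obsii$ as an induced minor. Making the second dominating branch set non-adjacent to $D$ and dominating an infinite subclique is the main obstacle. I would try choosing $v$ and then a finite clique extension $K_C\ni v$ inside $C\cup S$, for instance a maximal clique if it is finite, and apply Halin's theorem to bound the minimal separators between the two sides.

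Given the Extension Lemma, apply it simultaneously to all components at step $i$ and attach the resulting finite clique parts below $t_C$. Exhaustion uses the normal spanning tree $T_G$, rooted at some $r\in G^0$, in the style of \cite{pitz2020unified}. At each step I would additionally include in $X$ the $T_G$-minimal vertex of $C$, which is unique since $C$ is connected and hence contains a down-closed-from-above subtree. Then every vertex at $T_G$-height $n$ enters by step roughly $n$, because each remaining component's minimal vertex lies strictly deeper each round. Alternatively, one can argue as in \cref{thm_main_chordal}: a leftover component $D$ would have infinite clique neighbourhood $N(D)$, which is impossible, since in a graph with a normal spanning tree neighbourhoods of such components along a ray are finite (the down-closure of a node is a finite chain). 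Hence $\bigcup G^i=G$, and the union of the $(T^i,\cV^i)$ is the desired decomposition into finite cliques.
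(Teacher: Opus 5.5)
\textbf{There is a genuine gap in the sufficiency direction, exactly where the theorem is hard.} Your necessity part is fine.

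Your Extension Lemma, as you formulate it, is trivial and never uses the $\obsii$ hypothesis. Since $S=N(C)$ is a finite clique, iterating \cref{lem_chordal_1} gives $v\in C$ with $S\subseteq N(v)$. Then $X=\{v\}$ works, because every component $D$ of $C-v$ has $N(D)\subseteq S\cup\{v\}$. For any finite $X$ these neighbourhoods are automatically finite. So your worry about infinite $N(D)$, and the sketched contraction onto $\obsii$, address a non-issue. The real constraint is that $N(D)$ lie inside a single clique part. For an arbitrary finite $X$, $N(D)$ need not even be a clique, contrary to your remark that it is always a minimal separator.

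All the content of the theorem sits in one unproved sentence: that you additionally put the $T_G$-minimal vertex $v_C$ into $X$. When $v_C$ misses some vertex of $S$, you need a finite $X\ni v_C$ such that every component of $C-X$ has its neighbourhood inside one clique of $G[S\cup X]$. You give no argument for this.

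Your alternative exhaustion argument is false. Vertices of $N(D)$ may lie above the minimal vertex of $D$, so infinite clique neighbourhoods are compatible with normal spanning trees. Take $K_{\aleph_0}$ on $\{d,q_1,q_2,\dots\}$, start from $G^0=\{q_1\}$, and always choose the next $q_j$ as $X$. This leaves $D=\{d\}$ uncovered with $N(D)=\{q_1,q_2,\dots\}$. Indeed, that route uses only chordality and the normal spanning tree. It would therefore decompose $\obsii$ itself, which is chordal and has a normal spanning tree, into finite cliques, which is impossible.

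\textbf{How the paper handles this.} It never requires absorbing $v_C$ at once. Its Extension Lemma (\cref{lem:finiteCliqueExt}) only yields a finite clique $B\supseteq S$ such that either $v_C\in B$, or the component of $G-B$ containing $v_C$ is unattached to $S$.

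This is where \cref{lem:HalinResult} enters. Together with \cref{cor:H2obstruction}, it shows there are only finitely many minimal $v_C{-}a$ separators for $a\in S\setminus N(v_C)$, all of them finite cliques. The paper picks one whose side $(C_*,S_*)$ containing $v_C$ is $\preceq$-maximal, and proves that $S_*\cup S$ is a clique.

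The $\obsii$ hypothesis is then used a second time, in the exhaustion. Suppose a component $D$ survives. Its minimal vertex equals $v_{C_i}$ for all large $i$, so all later steps are of the unattached kind. Choose $u_i\in S_i\setminus N(C_{i+1})$; these vertices form a connected set $H$ with no edge to $D$. Yet $H$ is joined to $D$ by infinitely many $H{-}D$ paths through distinct vertices of the infinite clique $N(D)$. Hence every minimal $H{-}D$ separator is infinite, and \cref{cor:H2obstruction} yields $\obsii$.

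Iterating \cref{lem:finiteCliqueExt} with fixed $v_*=v_C$, with termination guaranteed by this same argument, gives exactly your strengthened Extension Lemma. So your scheme can be completed, but only with these missing ingredients.
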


As briefly mentioned in the introduction, the forward direction of this characterisation follows from routine arguments regarding tree-decompositions inherited by minors. 
For completeness, we provide the formal verification:

\begin{lemma}\label{lem:Necessity}
    Suppose that $G$ is a graph admitting a tree-decomposition into finite cliques. 
    Then, $G$ is chordal, has a normal spanning tree and does not contain $\obsii$ as an induced minor.
\end{lemma}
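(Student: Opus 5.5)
We treat the three conclusions separately; throughout, $(T,\mathcal{V})$ denotes a tree-decomposition of $G$ all of whose parts $V_t$ are finite cliques.

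\emph{Chordality.} Suppose $C$ is an induced cycle of length at least four. Since every bag is a clique, $C$ does not lie inside a single bag; in fact no two non-adjacent vertices of $C$ share a bag. Take a non-adjacent pair $x,y \in C$ with $x \in V_{t_1}$ and $y \in V_{t_2}$, and an edge $t t'$ of the $t_1$--$t_2$ path in $T$. The adhesion set $V_t \cap V_{t'}$ separates $x$ from $y$ in $G$ by the standard separation property of tree-decompositions. The cycle consists of two internally disjoint $x$--$y$ paths, so this adhesion set must meet both of them, in vertices $u$ and $u'$. But $u,u' \in V_t$ are adjacent, and choosing $x,y$ suitably makes $u,u'$ non-adjacent on the induced cycle, a contradiction. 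A cleaner route is to cite the standard fact \cite[Proposition~12.3.6]{bibel}, or its proof, which only uses that bags are cliques and is local to finite subgraphs: a tree-decomposition into cliques restricts to any finite induced subgraph $H$, namely $C$, giving a tree-decomposition of $H$ into cliques. So every finite induced subgraph is chordal, and hence so is $G$. I will use this restriction argument.

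\emph{Normal spanning tree.} $G$ is connected (as in the theorem) and has a tree-decomposition into finite parts. I will cite the known equivalence, recalled in the introduction, that a connected graph has a normal spanning tree if and only if it admits a tree-decomposition into finite parts (Halin / Diestel, see \cite{bibel}). If a self-contained argument is wanted, the plan is as follows. Since the parts are finite, any finite vertex set is separated from the remainder by finitely many finite adhesion sets. It follows that every minor of $G$ inherits a decomposition into finite parts, and that $G$ contains neither of the forbidden minors from \cite{pitz2021proof}. I will just cite this.

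\emph{No induced $\obsii$-minor.} By Lemmas 12.3.2 and 12.3.3 of \cite{bibel}, a tree-decomposition into cliques passes to induced subgraphs and to contractions of connected sets. Concretely, given branch sets $B_h$ forming an induced model of $\obsii$, take the induced subgraph on their union, and give $h$ the bag index set $\bigcup_{v\in B_h}\{t : v \in V_t\}$. This set is connected in $T$ because $B_h$ is connected. This yields a tree-decomposition of $\obsii$, and its bags remain cliques since the minor is induced: two branch sets meeting a common bag $V_t$ contain adjacent vertices. Finiteness of the bags is where care is needed. The new bag at $t$ is the set of $h$ with $B_h \cap V_t \neq \emptyset$, which has size at most $|V_t|$, so it stays finite. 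This is the main thing to check.

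Finally, $\obsii$ has no tree-decomposition into finite cliques. Its two dominating vertices $a,b$ are non-adjacent, so they lie in no common bag. Hence some finite adhesion set separates them. But $a$ and $b$ have infinitely many common neighbours, i.e.\ infinitely many independent $a$--$b$ paths, so no finite set separates them. This contradiction shows $G$ has no induced $\obsii$-minor.
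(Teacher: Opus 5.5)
Your proposal is correct and essentially matches the paper's proof. Both cite the equivalence between normal spanning trees and tree-decompositions into finite parts, transfer the decomposition to $\obsii$ via Lemmas~12.3.2 and~12.3.3 of \cite{bibel} with bags $\{h : B_h\cap V_t\neq\emptyset\}$ (which stay finite cliques because the model is induced), and then separate the two dominating vertices by a finite adhesion set.

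The one detail you leave implicit, also in your abandoned first chordality attempt, is that the edge $e$ must lie on a shortest path between a bag containing $a$ and one containing $b$, so that $V_e$ contains neither vertex. This is exactly how the paper justifies that step. Your reduction of chordality to finite induced cycles via Proposition~12.3.6 fills in what the paper simply calls easy.
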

\begin{proof}
Every connected graph that admits a tree-decomposition into finite parts has a normal spanning tree, as one can verify by a routine application of Jung's \textit{Normal Spanning Tree Criterion} \cite[Satz 6']{jung1969wurzelbaume}; see also \cite[Theorem 2.2]{LinkedTDInfGraphs} for a detailed proof.
It is also easy to see that if a graph admits a tree-decomposition into cliques then it is chordal.

Thus, we focus here on the (non-existence) of $\obsii$ as an induced minor. 
    We first claim that $\mathcal{H}$ admits no tree-decomposition into finite cliques.
    Indeed, let $(T,\cV)$ be a tree-decomposition of $\mathcal{H}$ and let us assume for a contradiction every bag of $(T,\cV)$ induces a finite clique.
    Let $a_1$ and $a_2$ be the two dominating vertices of $\mathcal{H}$.
    Since $a_1$ and $a_2$ are not adjacent, but every bag of $(T,\cV)$ is complete, no bag contains both $a_1$ and $a_2$.

    But then let $P \subseteq T$ be a shortest path between two nodes $t_1,t_2\in T$ whose corresponding bags $V_{t_1}$ and $V_{t_2}$  contain $a_1$ and $a_2$, respectively. 
    Due to the above observation, $P$ has at least one edge $e$.
    By minimality of $P$, $V_e$ does neither contain $a_1$ nor $a_2$, and thus is a finite set separating $a_1$ from $a_2$ (by \cite{bibel}*{Lemma~12.3.1}).

    This contradicts the fact that $a_1$ and $a_2$ are joined by infinitely many pairwise disjoint paths in $\mathcal{H}$.
    
    Now, assume for a contradiction that $\{H_h \colon h \in \mathcal{H}\}$ comprises a model of an induced minor of $\mathcal{H}$ in $G$. Let $H$ be the subgraph of $G$ induced by $\bigcup_{h \in V(\obsii)} H_h$.
    Firstly, due to \cite{bibel}*{Lemma~12.3.2}, the pair $(T, (V_t \cap V(H))_{t \in T})$ is a tree-decomposition of $H$. 
        And this tree-decomposition of $H$ is still into finite cliques, since every bag is a subset of a bag of $(T,\cV)$.
    Hence, we may assume without loss of generality that $G=H$.
    Now, let $f:V(G) \longrightarrow V(\mathcal{H})$ be the map assigning to each vertex of $G$ the index of the branch set of $\{H_h \colon h \in\mathcal{H}\}$ containing it.
    For each $t \in T$ let $W_t := \{ f(v) \colon v \in V_t\}$.
    Then by \cite{bibel}*{Lemma~12.3.3}, the pair $(T,\mathcal{W})$ with $\mathcal{W}:=(W_t)_{t \in T}$ is a tree-decomposition of $\mathcal{H}$.
    Moreover, every bag of $(T,\mathcal{W})$ is finite, since every bag of $(T,\cV)$ was finite.
    But also every bag of $(T,\mathcal{W})$ comprises a finite clique: after all, given $t \in T$ and $x \neq y \in W_t$, we can write $x=f(v_x)$ for some $v_x \in V_t$ and $y=f(v_y)$ for some $v_y \in V_t$.
    Since $v_x$ and $v_y$ are adjacent, and $\{H_t \colon t \in \mathcal{H}\}$ is a model of an induced minor, also $x$ and $y$ are adjacent.
    This contradicts the fact that $\mathcal{H}$ admits no tree-decomposition into finite cliques.
\end{proof}

From now on to the end of this paper, and especially within \cref{subsec:extension}, we shall work towards establishing the converse direction of \cref{thm:TreeDecompositionConstruction}.
To that aim, however, \cref{subsec:preliminaries} first compiles the main technical lemmas about (minimal) vertex separators in chordal graphs.

\subsection{Preliminaries about separation properties}
\label{subsec:preliminaries}
This subsection closely follows the notation of Diestel in \cite{SimplicialMinors} for a fixed graph $G$.
Given a set of vertices $S\subseteq V(G)$, a connected component $C$ of $G - S$ is \defn{attached} (resp. \defn{unattached}) to a set $A\subseteq S$ if $A\subseteq N(C)$ (resp. $A\setminus N(C)\neq \emptyset$).
In the particular case where $S$ is a clique and $C$ is attached to $S$, the pair $(C,S)$ is called a \defn{side} of $G$. 
We shall endow the set of all sides of $G$ with the following partial order $\preceq$, as formalized by Lemma 3 in \cite{diestel199093}: \defn{$(C,S)\preceq (C',S')$} if, and only if, $C\subseteq C'$.

In its turn, given two disjoint sets of vertices $A,B\subseteq V(G)$, we define their set of \defn{crossing edges} \defn{$E(A,B)$}$:=\{uv\in E(G): u\in A, v \in B\}$. 
We say that a path $P$ in $G$ connecting a vertex $a\in A$ to a vertex $b\in B$ is an \defn{$A{-}B$ path} (or an \defn{$a{-}b$ path}) if $P\cap A =\{a\}$ and $P\cap B=\{b\}$.
In a dual sense, a set $S\subseteq V(G)$ is called an \defn{$A{-}B$ separator} if there exists no $A{-}B$ path in $G-S$.
In the particular case where $A=\{a\}$ and $B=\{b\}$ are singletons, the \defn{$a{-}b$ separator} $S$ is further required to contain neither $a$ nor $b$. 
When $S$ is $\subseteq{-}$minimal with this property, that is for every proper subset $S' \subsetneq S$ there exists an $A{-}B$ path in $G-S'$, we refer to  $S$ as a \defn{minimal $A{-}B$ separator}.

When $A$ and $B$ are connected and disjoint and have no crossing edges, the following lemma is enough for ensuring the existence of (minimal) $A{-}B$ separators:

\begin{lemma}\label{lem:ExistenceMinimalSeparators}
    Let $A,B \subseteq V(G)$ be two disjoint and connected sets of vertices such that $E(A,B)=\emptyset$.
    Then there exists a minimal $A{-}B$ separator $S$ that is disjoint from $A\cup B$.
    
    Moreover, an $A{-}B$ separator $S$ that is disjoint from $A \cup B$ is minimal if, and only if, the unique components of $G- S$ containing $A$ and $B$ respectively, are attached to $S$.
\end{lemma}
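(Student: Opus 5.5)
The plan is to obtain a separator by a component construction, shrink it to a minimal one, and then prove the ``moreover'' characterisation directly from the definition of minimality.

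\emph{Existence.} Let $D_B$ be the component of $G - N(A)$ containing $B$. This is well defined: $B$ is connected, and it avoids $N(A)$ because $E(A,B)=\emptyset$ and $A\cap B=\emptyset$. Put $S := N(D_B)$. Then $S\subseteq N(A)$, since any neighbour of $D_B$ outside $D_B$ must lie in $N(A)$. In particular $S$ is disjoint from $A$ and from $B$, and $S$ separates $D_B\supseteq B$ from everything else, so it is an $A{-}B$ separator. Next let $C_A$ be the component of $G-S$ containing $A$; it is well defined because $A$ is connected and disjoint from $S$. Set $S' := N(C_A)\subseteq S$.

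Every vertex of $S'$ has a neighbour in $D_B$. Also $C_A$ and $D_B$ are distinct components of $G-S$, and $S'$ still separates $A$ from $B$, because it separates $C_A$ from the rest of the graph. Hence both the component of $G-S'$ containing $A$ (namely $C_A$) and the component containing $B$ (which contains $D_B$) are attached to $S'$. By the ``moreover'' part, $S'$ is then minimal. So existence reduces to the characterisation, and no Zorn-type argument over infinite separators is needed. This construction-then-shrink step is where I expect the main care to be required, mainly in checking that $C_A$ is unchanged when we pass from $G-S$ to $G-S'$.

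\emph{Characterisation.} Let $S$ be an $A{-}B$ separator disjoint from $A\cup B$, and let $C_A, C_B$ be the components of $G-S$ containing $A$ and $B$. These are distinct.

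Suppose first that both are attached to $S$. Let $S''\subsetneq S$ and pick $s\in S\setminus S''$. The vertex $s$ has a neighbour in $C_A$ and a neighbour in $C_B$. Join $A$ to $s$ inside $C_A$, continue through $s$, and then join $s$ to $B$ inside $C_B$. This gives a walk avoiding $S''$, and it contains an $A{-}B$ path in $G-S''$. Hence $S$ is minimal.

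Conversely, suppose that some $s\in S$ is not adjacent to $C_A$ (the case of $C_B$ is symmetric). Any $A{-}B$ path in $G-(S\setminus\{s\})$ must meet $S$, since otherwise it would already be a path in $G-S$. So it must pass through $s$. Its initial segment from $A$ up to the first vertex of $S$ lies in $C_A$ and ends at $s$. But then $s$ is adjacent to $C_A$, or else the path starts at $s\in A$, which is impossible because $S$ is disjoint from $A$. This is a contradiction, so $S\setminus\{s\}$ is still a separator and $S$ is not minimal.
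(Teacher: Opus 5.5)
Your proof is correct and follows essentially the paper's route. You use the same separator $S=N(D_B)\subseteq N(A)$, and the same two directions of the characterisation; removing a single vertex, $S\setminus\{s\}$, plays the role of the paper's smaller separator $N(C_A)\subsetneq S$. Your shrink step is vacuous, though: since $S\subseteq N(A)$ and $A\subseteq C_A$, every vertex of $S$ already has a neighbour in $C_A$, so $S'=N(C_A)=S$, which is exactly how the paper concludes.
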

\begin{proof}
    We shall first prove the `Moreover'-part.
    Let $S$ be an $A{-}B$ separator disjoint from $A$ and $B$ and let $C_A$ and $C_B$ denote the components of $G-S$ containing $A$ and $B$, respectively.
    If $S$ is minimal, let us assume for a contradiction that $C_A$ does not fully attach into $S$.
    But then $N(C_A) \subsetneq S$ is a proper subset of $S$ that still separates $A$ from $B$ in $G$, a contradiction.
    On the other hand, let us assume that both $C_A$ and $C_B$ are attached to $S$.
    Then, for every $s \in S$, one can easily construct an $A{-}B$ path which hits $S$ precisely in $s$.
    Hence, no proper subset of $S$ can separate $A$ from $B$.
    
    For the main statement, let $S':=N(A)$.
    By assumption, $S'$ is disjoint from both $A$ and $B$, besides separating $A$ from $B$ in $G$.
    Let $C'_A$ be the component of $G-S'$ containing $A$.
    Then $N(C'_A)=N(A)=S'$.
    Now, let $C'_B$ be the component of $G-S'$ containing $B$ and let $S:= N(C'_B)$.
     In particular, since $A\subseteq C_A'$ and $B\subseteq C_B'$, the set $S$ is disjoint from both $A$ and $B$ by definition.
     We claim that $S$ is a minimal $A{-}B$ separator.

    By the `Moreover'-part, it suffices to prove that both $C_A$ and $C_B$ are attached to $S$.
    Indeed, $C_B$ attaches to $S$ again by definition of $S$.
    To see that $C_A$ attaches to $S$ as well, let us observe that $S \subseteq S'=N(C'_A)$ and hence $C_A \supseteq C'_A$.
    It follows that $N(C_A)=S$.
\end{proof}

In our special setting where the given graphs are chordal, and as first observed by Dirac in \cite{MR130190}, minimal $A{-}B$ separators are actually cliques:

\begin{corollary}\label{cor:MinimalSeparatorsIsClique}
    Let $G$ be a chordal graph and let $A,B \subseteq V(G)$ be two disjoint and connected sets such that $E(A,B)=\emptyset$.
    Then any minimal $A{-}B$ separator $S$ that is disjoint from $A \cup B$ must be a clique.
\end{corollary}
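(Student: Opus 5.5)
We argue by contradiction, following Dirac's classical argument. Let $S$ be a minimal $A{-}B$ separator disjoint from $A\cup B$, and suppose that $S$ contains two distinct non-adjacent vertices $s,s'$. Let $C_A$ and $C_B$ be the components of $G-S$ containing $A$ and $B$, respectively. These components are distinct, because $S$ separates $A$ from $B$ (the sets $A$ and $B$ are connected, and $S$ is disjoint from them). By the `Moreover'-part of \cref{lem:ExistenceMinimalSeparators}, both $C_A$ and $C_B$ are attached to $S$. In particular, $s$ and $s'$ each have a neighbour in $C_A$ and a neighbour in $C_B$.

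Since $C_A$ is connected, there is an $s{-}s'$ path whose interior lies in $C_A$. Choose such a path $P_A$ of minimal length, and choose $P_B$ analogously through $C_B$. Each path has at least one interior vertex, since $s$ and $s'$ are not adjacent. The union $P_A\cup P_B$ is a cycle $Z$, because the interiors lie in the disjoint sets $C_A$ and $C_B$. Its length is at least four.

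We now check that $Z$ has no chord. A chord of $Z$ cannot be $ss'$, by assumption. It cannot join an interior vertex of $P_A$ to an interior vertex of $P_B$, because there are no edges between distinct components of $G-S$. Finally, a chord within $P_A$, or within $P_B$, would yield a shorter path of the same kind, contradicting minimality. Hence $Z$ is an induced cycle of length at least four, contradicting that $G$ is chordal. Therefore $S$ is a clique.

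The only point requiring care is the attachment of $C_A$ and $C_B$ to all of $S$, which is exactly what the minimality characterisation in \cref{lem:ExistenceMinimalSeparators} supplies. With that in hand, the argument is routine.
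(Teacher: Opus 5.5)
Your proof is correct and follows essentially the same route as the paper. Both arguments use the `Moreover'-part of \cref{lem:ExistenceMinimalSeparators} to get two distinct components of $G-S$ attached to $S$, take shortest $s{-}s'$ paths through each, and note that their union is an induced cycle of length at least four; your explicit chord-by-chord check simply spells out the paper's remark that shortest paths are induced and that there are no $C_A{-}C_B$ edges.
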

\begin{proof}
    Let us assume for a contradiction $x \neq y \in S$ are two non-adjacent vertices.
    By the `Moreover'-part of \cref{lem:ExistenceMinimalSeparators}, there exist two distinct components $C_A$ and $C_B$ in $G-S$ that attach to $S$.
    Since $C_A$ attaches to $S$, there exists a shortest $x{-}y$ path $P_A$ in $G[C_A \cup \{x,y\}]$ and similarly since $C_B$ attaches to $S$, a shortest $x{-}y$ path $P_B$ in $G[C_B \cup \{x,y\}]$.
    Moreover, $|P_A|,|P_B|\geq 2$ and since both $P_A$ and $P_B$ were picked as shortest paths, they are induced paths in $G$.
    But then $P_A P_B$ defines an induced cycle of length at least $4$ in $G$, a contradiction to chordality.
\end{proof}

 In terms of the existence of minimal $A{-}B$ separators, forbidding the graph $\obsii$ as an induced minor has now the following interpretation:
\begin{lemma}\label{cor:H2obstruction}
    A chordal graph $G$ admits no embedding of $\obsii$ as an induced minor if and only if, for every two disjoint and connected sets $A,B \subseteq V(G)$ with $E(A,B)=\emptyset$, any minimal $A{-}B$ separator disjoint from $A\cup B$ is finite. 
\end{lemma}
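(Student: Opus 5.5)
We prove both directions by contraposition, using \cref{lem:ExistenceMinimalSeparators} and \cref{cor:MinimalSeparatorsIsClique} to translate between infinite minimal separators and models of $\obsii$.

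\emph{From an infinite separator to a model of $\obsii$.} Let $A,B$ be disjoint connected sets with $E(A,B)=\emptyset$, and let $S$ be an infinite minimal $A{-}B$ separator disjoint from $A\cup B$. By \cref{cor:MinimalSeparatorsIsClique}, $S$ is a clique. By the `Moreover'-part of \cref{lem:ExistenceMinimalSeparators}, the components $C_A\supseteq A$ and $C_B\supseteq B$ of $G-S$ are distinct and both attached to $S$. We take $C_A$ and $C_B$ as the branch sets of the two dominating vertices, and the singletons $\{s\}$ for $s\in S$ (or for a countably infinite subset of $S$) as the branch sets of the clique. These are disjoint and connected. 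Each $\{s\}$ has an edge to $C_A$ and to $C_B$, the singletons are pairwise adjacent, and there is no $C_A{-}C_B$ edge because they are distinct components of $G-S$. So this is an induced model of $\obsii$, which gives the forward direction.

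\emph{From a model of $\obsii$ to an infinite separator.} Let $\{H_h\}$ be an induced model with dominating vertices $a_1,a_2$ and clique vertices $k_1,k_2,\dots$. Set $A:=H_{a_1}$ and $B:=H_{a_2}$. These are disjoint and connected, and $E(A,B)=\emptyset$ because the model is induced. By \cref{lem:ExistenceMinimalSeparators}, there is a minimal $A{-}B$ separator $S$ disjoint from $A\cup B$. We claim $S$ is infinite. For each $i$, the branch set $H_{k_i}$ is connected and has edges to both $A$ and $B$, so $A\cup H_{k_i}\cup B$ contains an $A{-}B$ path whose interior lies in $H_{k_i}$. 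Since $S$ separates $A$ from $B$ and avoids $A\cup B$, it must meet every $H_{k_i}$. The branch sets are pairwise disjoint and there are infinitely many of them, so $S$ is infinite, contradicting the hypothesis.

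Neither direction uses any delicate argument. The only point needing care is the interpretation of ``separator'' when the interior of an $A{-}B$ path lies in $H_{k_i}$: any path from $A$ to $B$ that leaves $A$, travels inside $H_{k_i}$ and then enters $B$ contains an $A{-}B$ subpath in the paper's sense, whose interior still lies in $H_{k_i}$. Chordality is used only in the first direction, through \cref{cor:MinimalSeparatorsIsClique}, to ensure that the singleton branch sets form a clique.
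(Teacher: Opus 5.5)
Your proof is correct and follows the paper's argument. In one direction, an infinite minimal separator becomes an induced $\obsii$-model via \cref{cor:MinimalSeparatorsIsClique} and the attachment property from \cref{lem:ExistenceMinimalSeparators}. In the other, the pairwise disjoint clique branch sets each carry an $A{-}B$ path, so any separator avoiding $A\cup B$ must meet each of them and is therefore infinite. You are slightly more careful than the paper in two places: you explicitly invoke the existence part of \cref{lem:ExistenceMinimalSeparators} so that an infinite minimal separator actually exists, and you handle a possibly uncountable $S$ by passing to a countable subset.
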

\begin{proof}
    Fix two disjoint and connected sets $A,B\subseteq V(G)$ with $E(A,B) = \emptyset$. According to \cref{cor:MinimalSeparatorsIsClique}, any minimal $A{-}B$ separator $S$ that is disjoint from $A \cup B$ is a clique.
    Moreover, if $C_A$ and $C_B$ denote the components of $G-S$ containing $A$ and $B$ respectively, then both $C_A$ and $C_B$ are attached to $S$ by \cref{lem:ExistenceMinimalSeparators}.
    In other words, if $S$ is infinite, $C_A \cup S \cup C_B$ describes an embedding of $\obsii$ into $G$ as an induced minor as follows: the vertices of $S$ are their own branch sets, while $C_A$ and $C_B$ constitute the branch sets of the two dominating vertices. 
    Since each vertex $s \in S$ is adjacent to both $C_A$ and $C_B$, this gives indeed a minor model of $\obsii$.
    And since $C_A$ and $C_B$ are distinct components of $G - S$, there are no edges between $C_A$ and $C_B$ in $G$ and, hence, this minor description is also induced.

    Conversely, let us assume a given chordal graph $G$ admits $\obsii$ as an induced minor and let $H_a$ and $H_b$ denote the branch sets of the two unique dominating vertices. 
    Then $H_a$ and $H_b$ are disjoint and connected sets, while $E(H_a,H_b) = \emptyset$ due to the property of $\obsii$ being an induced minor.
    But any (minimal) $H_a{-}H_b$ separator $S$ that is disjoint from $H_a \cup H_b$ must be infinite, as witnessed by the infinitely many pairwise disjoint $H_a{-}H_b$ paths that can be obtained inside the minor model of~$\obsii$.
\end{proof}

\subsection{Proof of \cref{thm:TreeDecompositionConstruction}}\label{subsec:extension} We are now able to construct a tree-decomposition as claimed by \cref{thm:TreeDecompositionConstruction}, which will require a suitable iterative step for constructing its bags recursively. 
In this setting, our approach relies on the following result by Halin regarding the number of distinct finite minimal separators between two vertices:

\begin{lemma}[\cite{HALIN199297}, Theorem 1]\label{lem:HalinResult}
    Let $a,b \in V(G)$ be two distinct and non-adjacent vertices of a graph $G$.
    If there is an infinite set of finite minimal $a{-}b$ separators, then there is one infinite such minimal $a{-}b$ separator.
\end{lemma}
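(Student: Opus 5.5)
The statement is Halin's theorem, and the paper simply cites it. If I had to prove it, I would do so by an order-theoretic compactness argument on the minimal $a{-}b$ separators.

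\emph{Setup.} For a minimal $a{-}b$ separator $S$, write $C_a(S)$ and $C_b(S)$ for the components of $G-S$ containing $a$ and $b$. By the `Moreover'-part of \cref{lem:ExistenceMinimalSeparators}, both are attached to $S$, and $S=N(C_a(S))=N(C_b(S))$. Order the minimal separators by $S\leq T$ if $C_a(S)\subseteq C_a(T)$. Distinct minimal separators have distinct sets $C_a(\cdot)$, since $S$ is recovered as $N(C_a(S))$.

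\emph{Step 1: an infinite chain.} Given two minimal separators $S,T$, I would take the component $D$ of $a$ in $G-(S\cup T)$. Then $N(D)\subseteq S\cup T$ separates $a$ from $b$. Applying the construction in the proof of \cref{lem:ExistenceMinimalSeparators} to $D$ and $\{b\}$ yields a minimal separator $S\wedge T$ that lies below both $S$ and $T$. It is finite whenever $S$ and $T$ are, and it is a subset of $S\cup T$. There is a dual join obtained from the $b$-side. Using these lattice operations together with Ramsey's theorem on an infinite family $S_1,S_2,\dots$ of finite minimal separators, I would extract a strictly monotone sequence of finite minimal separators. By the $a\leftrightarrow b$ symmetry I may assume it is increasing: $C_a(T_1)\subsetneq C_a(T_2)\subsetneq\cdots$.

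One subtlety: if all $T_i$ were contained in a fixed finite set, they could not be infinitely many distinct. So at each stage I can choose new separators containing vertices outside any prescribed finite set.

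\emph{Step 2: the limit.} Let $A:=\bigcup_i C_a(T_i)$; it is connected and contains $a$. Each vertex of $C_b(T_j)$ that lies outside $A$ stays on the $b$-side. The key claim is $b\notin A$, which holds because $b\in C_b(T_i)$ for every $i$. Now apply \cref{lem:ExistenceMinimalSeparators} to $A$ and $\{b\}$, noting that there is no $A$–$b$ edge because $b\notin N(C_a(T_i))=T_i$. This gives $B$, the component of $b$ in $G-N(A)$, and a minimal separator $T:=N(B)$.

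\emph{Step 3: showing $T$ is infinite.} This is the main obstacle. The plan is to show that every vertex $s\in T$ belongs to $T_i$ for all large $i$; that is, $T$ is the eventual limit $\liminf T_i$.

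Suppose $T$ were finite. Then $T\subseteq T_i$ for all large $i$. Since every vertex of $T_i\setminus T$ is absorbed into $A$ eventually, and $C_a(T_i)$ strictly increases, this should force $T_i=T$ for large $i$, contradicting distinctness.

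The delicate point is controlling vertices of $T_i$ that lie neither in $A$ nor in $T$. I would handle these by choosing the chain in Step 1 so that $C_b(T_i)$ decreases as well. This is where the extraction of Step 1 has to be done carefully, for example by always replacing $T_{i+1}$ with $T_i\vee T_{i+1}$ so that the sides are nested on both ends.
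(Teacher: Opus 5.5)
The paper does not prove this statement; it quotes it from Halin, so your argument has to stand on its own. The overall strategy can be made to work: extract a strictly monotone chain of finite minimal separators, pass to $A=\bigcup_i C_a(T_i)$ and $T=N(B)$, and show $T$ is infinite. As written, however, Step~1 and Step~3 both have genuine gaps.

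\emph{Step 1.} Ramsey's theorem gives either an infinite chain or an infinite antichain. In the antichain case, meets and joins alone do not produce a chain. The iterated meets $S_1\wedge\dots\wedge S_n$ and joins may both stabilise, and abstract lattice theory cannot rule this out: the lattice with a bottom, a top and infinitely many atoms has an infinite antichain and no infinite chain. Choosing separators with ``new'' vertices says nothing about comparability. You need the finiteness of the separators, for example as follows. Suppose the meets stabilise at a finite $M$. Then each $C_a(S_i)$ strictly contains $C_a(M)$, so it contains a vertex of the finite set $M=N(C_a(M))$. By pigeonhole, some $x\in M$ lies in $C_a(S_i)$ for infinitely many $i$, and every finite meet of these $S_i$ has $C_a(\cdot)\supseteq C_a(M)\cup\{x\}$. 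So either these meets strictly decrease infinitely often, or they stabilise at some $M^{(1)}>M$. Iterating gives a strictly increasing chain $M<M^{(1)}<M^{(2)}<\cdots$ of finite minimal separators.

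\emph{Step 3.} The step you rely on is false. It is true that $T_i\subseteq A\cup N(A)$ and $N(A)=\liminf_i T_i$. But $T=N(B)$ can be a proper subset of $N(A)$, so neither $T=\liminf_i T_i$ nor ``every vertex of $T_i\setminus T$ is eventually absorbed into $A$'' holds in general. For a counterexample, take a ray $a=r_0r_1r_2\cdots$, vertices $y_i$ ($i\geq 1$) adjacent to $r_i$ and $b$, and a vertex $x$ adjacent to all $r_i$ and all $y_i$ ($i\geq 1$). Then the sets $T_n=\{r_{n+1},x,y_1,\dots,y_n\}$ ($n\geq 1$) form a strictly increasing chain of finite minimal $a$--$b$ separators. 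Here $A$ is the ray, $B=\{b\}$ and $T=\{y_1,y_2,\dots\}$, yet $x\in T_n\setminus T$ for every $n$.

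Your proposed remedy is vacuous. The inclusion $C_a(T_i)\subseteq C_a(T_{i+1})$ already forces $C_b(T_{i+1})\subseteq C_b(T_i)$, since $T_i\subseteq C_a(T_{i+1})\cup T_{i+1}$; and $T_i\vee T_{i+1}=T_{i+1}$ anyway. The missing idea is to argue on the $b$-side:
\begin{itemize}
\item If $T$ were finite, then $T\subseteq N(A)=\liminf_i T_i$ gives $T\subseteq T_i$ for all large $i$.
\item For such $i$, $C_b(T_i)$ is connected, contains $b$ and avoids $N(B)$, so it lies in $B$.
\item Conversely $B\subseteq C_b(T_i)$, because $B$ avoids $A\cup N(A)\supseteq T_i$.
\item Hence $C_b(T_i)=B$ and $T_i=N(B)=T$ for all large $i$, contradicting strict monotonicity.
\end{itemize}
With these two repairs your argument goes through.
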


For graphs with no induced $\obsii$-minor, \cref{lem:HalinResult} guarantees that each pair of non-adjacent vertices admits only finitely many minimal separators.

\begin{lemma}[Extension Lemma]
\label{lem:finiteCliqueExt}
Let $G$ be a chordal graph admitting no embedding of $\obsii$ as an induced minor, and let $(C,A)$ be a finite side in $G$. 
Then for each $v_* \in V(C)$, there exists a finite clique $B \supseteq A$ such that either $v_* \in B$, or the unique component of $G-B$ containing $v_*$ is unattached to $A$.
\end{lemma}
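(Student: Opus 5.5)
The plan is to build $B$ by a finite greedy sequence of one-vertex clique extensions into the component containing $v_*$, and to force termination with \cref{lem:HalinResult} and \cref{cor:H2obstruction}. The termination step is not yet settled; I give a candidate argument for it below and flag where it may fail.

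\emph{Trivial case.} If $v_*$ is adjacent to all of $A$, then $B:=A\cup\{v_*\}$ is a finite clique containing $v_*$, and we are done. So assume there is some $a\in A$ with $av_*\notin E(G)$.

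\emph{One extension step.} Set $B_0:=A$ and $D_0:=C$. Suppose we have a finite clique $B_i\supseteq A$ and the component $D_i$ of $G-B_i$ containing $v_*$, with $D_i$ attached to $A$. Write $S_i:=N(D_i)\subseteq B_i$.
\begin{itemize}
\item $S_i$ is a finite clique, so \cref{lem:chordal_1} applies with $K=B_i$. A strictly increasing chain $N_{S_i}(w_1)\subsetneq N_{S_i}(w_2)\subsetneq\cdots$ must stop because $S_i$ is finite. Hence there is $w_{i+1}\in D_i$ with $N_{S_i}(w_{i+1})=S_i$.
\item In particular $w_{i+1}$ is adjacent to all of $A\subseteq S_i$.
\item Put $B_{i+1}:=S_i\cup\{w_{i+1}\}$, or $A\cup\{w_1,\dots,w_{i+1}\}$ if those $w_j$ happen to lie in $S_i$. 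This is a finite clique containing $A$.
\item Let $D_{i+1}$ be the component of $G-B_{i+1}$ containing $v_*$, if $v_*\neq w_{i+1}$. Then $D_{i+1}\subsetneq D_i$ and $N(D_{i+1})\subseteq S_i\cup\{w_{i+1}\}$.
\end{itemize}
We stop as soon as $v_*=w_{i+1}$ or $D_{i+1}$ is unattached to $A$. In either case $B:=B_{i+1}$ satisfies the lemma.

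\emph{Termination, the main obstacle.} It remains to rule out an infinite run, and this is where the hypothesis on $\obsii$ has to be used. In an infinite run we get a strictly decreasing sequence $D_0\supsetneq D_1\supsetneq\cdots$ of components containing $v_*$, each attached to $A\ni a$. The vertices $w_{i+1}\in N(D_{i+1})$ form an infinite clique, all adjacent to $a$.

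My first attempt would be to extract from this infinitely many distinct minimal $a$–$v_*$ separators, each finite.
\begin{itemize}
\item For each $i$, let $X_i$ be the component of $G-(N(a)\cap(B_i\cup D_i))$ containing $v_*$, and put $T_i:=N(X_i)$.
\item Since $T_i\subseteq N(a)$, the component of $a$ in $G-T_i$ is attached to $T_i$. So \cref{lem:ExistenceMinimalSeparators} makes $T_i$ a minimal $a$–$v_*$ separator.
\item By \cref{cor:H2obstruction}, every such separator is finite. By \cref{lem:HalinResult}, there are then only finitely many of them.
\item One then has to check that $w_{i+1}\in T_{i+1}\setminus T_i$ for infinitely many $i$. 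The idea is that $w_{i+1}$ lies inside $D_i$ and is adjacent to $D_{i+1}$, so $T_i$ lives in $B_i$ while $T_{i+1}$ meets $D_i$. This would give infinitely many distinct $T_i$, a contradiction.
\end{itemize}

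If this bookkeeping fails, the fallback is more direct. Take the nested connected sets $D_i$, choose $v_*$–$w_{i+1}$ paths inside $D_i\cup\{w_{i+1}\}$, and assemble an induced $\obsii$-minor with branch sets $\{a\}$, the singletons $w_{i+1}$, and a connected set around $v_*$ that avoids $N[a]$. This would contradict the hypothesis directly through \cref{cor:H2obstruction}.
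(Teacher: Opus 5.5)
\textbf{There is a genuine gap.} Your process does not merely lack a termination proof. With the choice of $w_{i+1}$ left arbitrary, it can run forever, so no argument can rule out an infinite run.

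Take an infinite clique $K\cup\{a,u\}$ and add a vertex $v_*$ adjacent only to $u$. This graph is chordal, and it has no induced $\obsii$-minor. Of the two non-adjacent dominating branch sets, one must contain $v_*$, since all other vertices form a clique. If that branch set contains anything else, it contains $u$ and is adjacent to everything; if it equals $\{v_*\}$, it touches only one other branch set.

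Now let $A=\{a\}$ and $C=K\cup\{u,v_*\}$. At stage $i$ you may legitimately pick $w_{i+1}\in K\setminus\{w_1,\dots,w_i\}$, because it dominates $S_i=\{a,w_1,\dots,w_i\}$. Then $D_{i+1}=(K\setminus\{w_1,\dots,w_{i+1}\})\cup\{u,v_*\}$ still contains $v_*$ and is still attached to $a$ through $u$, at every stage. Picking $w_1=u$ would have finished at once, so the lemma is fine; it is the unguided greedy step that fails.

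This also shows why your first termination attempt cannot work, on any graph. Since $a\in S_i$, every $w_{i+1}$ lies in $N(a)$ and therefore never enters $X_i$. Hence $X_i\subseteq D_{i+1}$ and $X_{i+1}=X_i$, so all the $T_i$ coincide; in the example, $X_i=\{v_*\}$ and $T_i=\{u\}$. The fallback fails on the same example, because the connected set around $v_*$ avoiding $N[a]$ is $\{v_*\}$, which has the single neighbour $u$. Incidentally, the $w_i$ need not form a clique either, since $w_j$ can drop out of $S_k$ for $k>j$.

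\textbf{The missing idea} is a choice of $B$ that is forced to separate $v_*$ from $A$. The paper gets this in one shot, without iteration:
\begin{enumerate}
\item For $a\in A\setminus N(v_*)$, the minimal $v_*$--$a$ separators are finite cliques by \cref{cor:H2obstruction}, and there are only finitely many of them by \cref{lem:HalinResult}.
\item Since $A$ is finite, one can choose such a separator $S_*$, for some $a_*$, whose component $C_*\ni v_*$ is $\subseteq$-maximal.
\item Suppose some $s_*\in S_*$ and $a\in A$ were non-adjacent. A minimal $s_*$--$a$ separator $S$ avoids $C_*$, using that $S_*$ is a clique and $A\cap C_*=\emptyset$. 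So $S$ is also a minimal $v_*$--$a$ separator whose $v_*$-component contains $C_*\cup\{s_*\}$, contradicting maximality.
\item Hence $B:=S_*\cup A$ is a finite clique. The component of $v_*$ in $G-B$ is $C_*$, and $a_*\in A$ has no neighbour in it.
\end{enumerate}
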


\begin{proof}
    Suppose that no finite clique $B\supseteq A$ contains $v_*$, or, equivalently, that $ A\setminus N(v_*)\neq \emptyset$.
    Then, for each $a \in A\setminus N(v_*)$, let $\mathcal{S}_a$ denote the set of all minimal $v_*{-}a$ separators containing neither $a$ nor $v_*$. Next, define $\mathcal{S} = \bigcup_{a \in A\setminus N(v_*)} \mathcal{S}_a$.
     We recall from \cref{cor:H2obstruction} that $\mathcal{S}$ is a set of finite cliques, since $G$ does not contain $\obsii$ as an induced minor. Moreover, each $\mathcal{S}_a$ is finite by \cref{lem:HalinResult}, and since $A$ is finite, $\mathcal{S}$ is finite, too.
    Then, let $(C_*,S_*)$ be a $\preceq{-}$maximal pair from the finite set $$\mathcal{C}:=\{(D,S): S \in \mathcal{S}, \, D\text{ is the connected component of }G- S\text{ containing }v_*\}.$$
    Let $a_*\in A$ be such that $S_*$ is a minimal $v_*{-}a_*$ separator.
    Since $A$ is a clique, it follows that $C_*\cap A = \emptyset$.
    
    Now, it suffices to prove that $S_*\cup A$ is a clique: after all, the component of $G- (S_*\cup A)$ containing $v_*$ is $C_*$, and, witnessing that $C_*$ is unattached to $A$, the vertex $a_*$ has no neighbour in $C_*$.

    Therefore, suppose for a contradiction that $S_*\cup A$ is not complete.
    Since both $S_*$ and $A$ are cliques, it follows that some $s_* \in S_*$ is not a neighbour of some $a\in A$.
    By \cref{cor:H2obstruction}, there exists a minimal finite clique $S\subseteq V(G)\setminus \{s_*,a\}$ separating $s_*$ and $a$. 
   
    \begin{claim}\label{claim:SepEmpty}
        $S\cap C_* = \emptyset$.
    \end{claim}
    \begin{proof}[Proof of Claim 1]\renewcommand{\qedsymbol}{$\blacksquare$}
    Suppose for a contradiction there exists a vertex $s'\in S\cap C_*$.
    Since $S$ is a minimal $s_*{-}a$ separator, it follows by \cref{lem:ExistenceMinimalSeparators} that the component $C'$ of $G-S$ containing $a$ is attached to $S$ and hence there exists a $s'{-}a$ path $P'$ in $C'\cup S$ intersecting $S$ in exactly $s'$.
    Also, there exists a vertex $p\in P' \cap S_*$ (because $s'\in C_*$ and $A\cap C_*=\emptyset$).
    Since $S_*$ is a clique, the concatenation $s_*pP'a$ provides a well-defined $s_*{-}a$ path that does not meet $S$, contradicting the fact that $S$ separates $s_*$ from $a$.
    \end{proof}

       Let $D$ be the component of $G- S$ containing $s_*$, which is attached to $S$ by \cref{lem:ExistenceMinimalSeparators}.
        Next, since $(C_*,S_*)$ is a side, there is a $v_*{-}s_*$ path $P$ in $C_*\cup S_*$ which meets $S_*$ precisely in its endpoint $s_*$.
        As $s_*\notin S$, \cref{claim:SepEmpty} implies that $P\cap S = \emptyset$.
        This shows that $C_*\subsetneq D$, and that $D$ is the connected component of $G-S$ containing $v_*$.
    As such, since both $D$ and the component of $G - S$ containing $a$ are attached to $S$ (by minimality of $S$ as an $s_*{-}a$ separator), \cref{lem:ExistenceMinimalSeparators} yields that $S$ is also a minimal $v_*{-}a$ separator, i.e.\ $S \in \mathcal{S}_{a}$. 
    Thus, we have $(D, S) \in \mathcal{C}$ but $(C_*,S_*)\prec (D,S)$, contradicting the $\preceq{-}$maximality of the side $(C_*,S_*)$.
        \end{proof}

The proof of \cref{thm:TreeDecompositionConstruction} will conclude by an iterative application of \cref{lem:finiteCliqueExt} above.
We recall that, if a given graph $G$ admits a normal spanning tree $T$ and $\leq_T$ denotes the tree-order of $T$ (with respect to an arbitrary root), then every connected induced subgraph of $G$ admits a $\leq_T{-}$minimal element (see \cite[Lemma 1.5.4]{bibel}).

\begin{proof}[Proof of \cref{thm:TreeDecompositionConstruction}]  
The forwards direction of \cref{thm:TreeDecompositionConstruction} follows by \cref{lem:Necessity}.
For the backwards direction, fix a chordal graph $G$ that does not contain $\obsii$ as an induced minor, but that admits a normal spanning tree $T_N$.
We denote by $\leq$ the corresponding tree-order of $T_N$, and denote the root of $T_N$ by $r$.

Our aim is then to obtain a tree-decomposition $(T,\cV)$ of $G$ into finite cliques.
Similar to the strategy in \cref{sec_2} above, we shall construct $(T,\cV)$ level by level. 
We do this by constructing induced subgraphs $G^0 \subset G^1 \subset \dots$ of $G$ together with tree-decompositions $(T^i, \cV^i)$ of $G^i$ into finite cliques, so that the tree-decompositions extend each other in the sense that
    \begin{enumerate}[label=(\roman*)]
    \item \label{item:increasingTd} $T^0 \subseteq T^1 \subseteq T^2 \subseteq \dots$,
    \item  \label{item:bagswelldefined} $V_t^i = V_t^j$ for all $i \leq j$ and $t \in T^i$.
    \end{enumerate}
    In particular, from now on we simply write $V_t$ for the part corresponding to a vertex $t \in T^i$. 
    To keep the construction going, we also ensure inductively that
    \begin{enumerate}[resume, label=(\roman*)]
    \item \label{item:compLeaf} for every component $C$ of $G-G^i$, its neighbourhood $N(C)$ is included in a part $V_{t_C}$ for a unique leaf $t_C \in T^i$ at height $i$.
    \end{enumerate}
    
    For $i=0$, let $(T^0,\mathcal{V}^0)$ be the trivial tree-decomposition of the subgraph $G^0:=\{r\}$ comprising only the root of $T_N$.

  Next, assume that for some $i \in \N$ we have already constructed a tree-decomposition~$(T^i, \cV^i)$ of $G^i$ into finite cliques. Let $C$ be a component of $G-G^i$ and write $S_C = N(C)$. By \cref{item:compLeaf}, the set $S_C$ is included in a (finite) clique $V_{t_C}$ for a unique leaf $t_C \in T^i$ at height $i$.
  Let $v_C$ denote the unique minimal element of $V(C)$ regarding the normal tree-order $\leq$. 
  Then $(C,S_C)$ is a side in $G$.
  By \cref{lem:finiteCliqueExt}, there exists a finite clique $B_C \subseteq G[S_C \cup C]$ with $S_C \subseteq B_C$ that satisfies the following property: 
  \begin{enumerate}[label=$(*)$]
      \item \label{item:starProp} either $v_C\in B_C$ or the connected component $G - B_C$ containing $v_C$ is unattached to $S_C$.
    \end{enumerate}
    Then, we declare $T^{i+1}$ to be the extension of $T^i$ in which, for every such component $C$ of $G-G^i$, we add a successor $s_C$ to $t_C$, with corresponding bag $V_{s_C} := V(B_C)$. 
    Let $G^{i+1}$ be the subgraph of $G$ induced by all parts of $T^{i+1}$. It is clear that $(T^{i+1}, \cV^{i+1})$ is a tree-decomposition of $G^{i+1}$ satisfying \cref{item:increasingTd}--\cref{item:compLeaf}. 
 
      The desired tree-decomposition\ satisfying the premises of \cref{thm:TreeDecompositionConstruction} will be $T := \bigcup_{i \in \N} T^i$ together with the parts $V_t$ for $t \in T$.
      These definitions are well-defined due to \cref{item:increasingTd} and \cref{item:bagswelldefined}. 
      
      Clearly, $(T, \cV)$ is a tree-decomposition\ of $\bigcup_{i \in \N} G^i$. To complete the proof, it suffices to prove that $\bigcup_{i \in \N} G^i = G$.
    Suppose for a contradiction that $\bigcup_{i \in \N} G^i \neq G$. 
    Then there is a component $D$ of $G - \bigcup_{i \in \N} G^i$.
     Let $v$ denote the unique minimal element of $V(D)$ regarding the normal tree-order $\leq$.
    For each $i \in \N$, let $C_i$ be the unique component of $G - G^i$ with $C_i \supseteq D$.

   We write $t_{i}:=t_{C_i}$, $S_{i}:=S_{C_i}$, $v_i:=v_{C_i}$ and $B_i:=B_{C_i}$ as in the definition of $(T^{i+1},\mathcal{V}^{i+1})$.
    By construction, we have $C_{i+1}\subsetneq C_i$ for every $i\in \mathbb{N}$, as well as $t_i<t_{i+1}$ in the tree-order of $T$. This implies that $v_{i}\leq v_{i+1}\leq v$, and hence there exists $k\in \mathbb{N}$ such that $v_{i} = v$ for all $i\geq k$.
    
    This means that $v_{i}\notin B_{i}$ for every $i\geq k$ and, hence, $C_{i+1}$ is unattached to $S_{i}$ for all $i\geq k$ by property \cref{item:starProp}. 
   Hence, we may choose a vertex $u_i \in S_i \setminus N(C_{i+1})$ for each $i \geq k$. 
       Since $S_i \subseteq B_i$ and $S_{i+1}\subseteq V_{t_{i+1}} = B_{i}$ by construction, it follows that $u_i,u_{i+1}\in B_{i}$ and, thus, either $u_i=u_{i+1}$ or $u_iu_{i+1}\in E(G)$.
    In particular, the graph $H$ induced by $\{u_i \colon i\geq k\}$ is connected.

    As in the earlier proof of \cref{thm_main_chordal}, the set $N(D)$ is an infinite clique.
    Since for every $x\in N(D)$ it holds that $x \in B_{i}$ for all sufficiently large $i$, we also have $xu_i\in E(G)$. 
    Hence, there are infinitely many independent $H$--$D$ paths in $G$.
    Moreover, $N(D)\cap H = \emptyset$ by the choice of each $u_i\in H$.
    Hence, $A:=V(H)$ and $B:=V(D)$ are disjoint connected sets of vertices in $G$ such that there is no edge of $G$ with one endvertex in $A$ and the other in $B$.
    Thus, by \cref{lem:ExistenceMinimalSeparators}, there exists a minimal $H{-}D$ separator $S$ that is disjoint from both $H$ and $D$.
    Since we have infinitely many $H$--$D$ paths, it follows that $S$ must be infinite.
    But then \cref{cor:H2obstruction} shows the existence of $\obsii$ as an induced minor of $G$, giving the desired contradiction.
\end{proof}

\bibliographystyle{amsplain}

\bibliography{collective.bib}

\end{document}